\def\res{\mathop{\mathrm{Res}}\limits}
\def\be{\begin{equation}}
\def\ee{\end{equation}}
\def\barr{\begin{array}}
\def\earr{\end{array}}
\newtheorem{dummy}{dummy}[section]
\newtheorem{Proposition}[dummy]{Proposition}
\newtheorem{Theorem}[dummy]{Theorem}
\newtheorem{remark}[dummy]{Remark}
\newtheorem{definition}[dummy]{Definition}
\newtheorem{example}[dummy]{Example}
\newcommand{\ba}{\begin{equation}\begin{aligned}}
\newcommand{\ea}{\end{aligned}\end{equation}}
\newcommand{\bml}{\begin{multline}}
\newcommand{\eml}{\end{multline}}
\newcommand{\bC}{\mathbb{C}}
\newcommand{\bD}{\mathbb{D}}
\newcommand{\bP}{\mathbb{P}}
\newcommand{\bQ}{\mathbb{Q}}
\newcommand{\bR}{\mathbb{R}}
\newcommand{\bZ}{\mathbb{Z}}
\newcommand{\one}{\mathbf{1}}
\newcommand{\cA}{\mathcal{A}}
\newcommand{\cB}{\mathcal{B}}
\newcommand{\cC}{\mathcal{C}}
\newcommand{\cE}{\mathcal{E}}
\newcommand{\cF}{\mathcal{F}}
\newcommand{\cH}{\mathcal{H}}
\newcommand{\cI}{\mathcal{I}}
\newcommand{\cL}{\mathcal{L}}
\newcommand{\cO}{\mathcal{O}}
\newcommand{\cU}{\mathcal{U}}
\newcommand{\cV}{\mathcal{V}}
\newcommand{\cX}{\mathcal{X}}
\newcommand{\dd}{\mathrm{d}}
\newcommand{\orb}{\mathrm{orb}}
\newcommand{\Spec}{\mathrm{Spec}}
\newcommand{\Def}{\mathrm{Def}}
\newcommand{\Ob}{\mathrm{Ob}}
\newcommand{\GL}{\mathrm{GL}}
\newcommand{\ch}{\mathrm{ch}}
\begin{document}

\title{Wall-crossing for K-theoretic quasimap invariants I}
\author{Konstantin Aleshkin}
\address{Konstantin Aleshkin, Department of Mathematics, Columbia University, 2990 Broadway, New York, NY 10027}
\email{aleshkin@math.columbia.edu}

\author{Chiu-Chu Melissa Liu} 
\address{Chiu-Chu Melissa Liu, Department of Mathematics, Columbia University, 2990 Broadway, New York, NY 10027}
\email{ccliu@math.columbia.edu}

\maketitle

\begin{abstract}
  We study K-theoretic GLSM invariants with one-dimensional gauge group and introduce elliptic central charges that depend
  on an elliptic cohomology class called an elliptic brane and a choice of level structure. These central charges have an integral
  representation related to an interpolation problem for the elliptic brane and satisfy natural q-difference equations. Integral representaions
  lead to the wall crossing statement for the central charges in two different phases of the GLSM. We explain how the orbifold structure
  leads to differences between wall crossing of the usual (K-theoretic) and elliptic central charges.
\end{abstract}

\flushbottom

\tableofcontents


\section{Introduction}

Let $V$ be a complex vector space, $G$ be a reductive group acting on $V$ and $\omega$ be a $G$-character.
Geometric invariant theory produces a semiprojective Deligne-Mumford (DM) stack $\cX := [V \sslash_{\omega} G]$. Quasimap invariants~\cite{CKM} are Gromov-Witten type
enumerative invariants associated to GIT quotients. These invariants exploit the fact that geometry of $\cX$ can be captured by the much simpler
space $V$ together with the group action on it: quasimap is a particular type of map to the Artin stack $[V/G]$ which generically maps to the
$\omega$-stable locus. It is known that under a certain choice of stability condition ($\epsilon = 0^{+}$ stability) genus zero quasimap
graph spaces are particulary simple - the domain curve must be irreducible. Corresponding invariants can often be computed explicitly.

There exists a generalization of quasimaps allowing to capture enumerate geometry of a critical locus of a function on $\cX$. This generalization
is called Gauge Linear Sigma Model (GLSM) following the paper of Witten~\cite{Witten}. Mathematical theory have been developed recently
by several groups of people:~\cite{FJR},~\cite{5aut},~\cite{FK}. Data of GLSM consists of the GIT quotient data together with
a holomorphic $G$-invariant function $W \; : \; V \to \bC$ called superpotential that is weighted homogeneous with respect to
additional $\bC^{*}_{R}$-action on $V$ that commutes with the $G$-action. GLSM depends on the stability condition $\omega$ in a locally-constant
way. Namely, the space of characters decomposes into chambers where the GIT quotient $\cX$ and the associated GLSM are different.
Such different models are sometimes called phases of the GLSM associated to different choices of $\omega$.
The simplest
example is so-called Landau-Ginzburg/Calabi-Yau correspondence~\cite{LGCY, CIR}. It is a manifestation of the fact that a Calabi-Yau hypersurface
in a (weighted) projective space and certain FJRW theory are different phases of the same GLSM. Relation between different phases of
GLSM is called GIT stability wall crossing. This wall crossing is directly related to
Crepant Transformation Conjecture~\cite{Ruan, Ruan2, Ruan3} for the underlying GIT quotients, e.g.~\cite{CIJ}.

GIT stability wall crossing for abelian GLSMs is studied in~\cite{AL1}.  
In~\cite{AL1} we compute genus-zero invariants of abelian GLSMs directly and show that they
have Mellin-Barnes type integral representation of the form proposed in~\cite{HoriRomo}. 
(In \cite{Shoemaker} genus-zero invariants of a large class of GLSMs are computed via relation to quasimap invariants of the underlying GIT quotients.)
Usually, when doing this type of wall crossing,
people work with cohomology-valued Givental $I$-functions or associated D-modules. Instead,
we work with central charges   because they are very convenient in the setting of wall crossing and mirror symmetry.\\

In this paper we study genus-zero K-theoretic quasimap GLSM invariants. GIT stability wall crossing for K-theoretic invariants is much less studied and to our knowledge
remains a mystery beyond some particular cases. Notably, there are results in holomorphic symplectic smooth case~\cite{AO, Nonabelian, Zhou} and
in the smooth balanced case~\cite{GW}. In particular, the classical Landau-Ginzburg/Calabi-Yau correspondence has not been worked out. In fact,
computations in~\cite{Wen} showed that it should break even for the quintic threefold. Here we consider the general GLSM with one-dimensional
gauge group $\cX = [V \sslash \bC^{*}]$. This includes the classical Landau-Ginzburg/Calabi-Yau correspondence and many features of the general
case, but is technically simpler. We note that our result also implies the wall crossing statement for the underlying GIT
if we put the superpotential and R-charges to zero. \\

We take the approach of~\cite{AL1}. First, we define elliptic central charges (Definition~\ref{def:ECC}) that are generalizations of the D-brane central
charges in quantum cohomology, see~\cite{Hosono}. A central charge can be thought of as a component of the $I$-function associated to a
B-type brane, that is a coherent sheaf for a GIT or a matrix factorization in the case of GLSMs. In both cases
the B-branes factor through the corresponding K-theory. Our elliptic central charges depend
on an elliptic cohomology class which provide a natural generalization of K-theory in the triality of generalized cohomology theories:
cohomology, K-theory and elliptic cohomology associated to the formal group laws on $\bC, \bC^{*}$ and elliptic curve $E = \bC^{*}/q^{\bZ}$.
In holomorphic symplectic setting the importance of elliptic cohomology was understood in~\cite{AO} in constructions of elliptic
stable envelopes. We also discuss appropriate modifications of the standard elliptic cohomology constructions in the orbifold case and
construct the orbifold elliptic Chern character, see Definition~\ref{def:orbEllipticChern}. The orbifold elliptic Chern character takes twisted
sectors into account similarly to how the usual orbifold Chern character~\cite{Kawasaki} does.

One important advantage of central charges is that they have natural integral representations of Mellin-Barnes and Euler type. We show that
our elliptic central charges also have integral representation called solid torus partition functions, 
K-theoretic $I$-functions and elliptic central charges have additional parameter called the level 
structure \cite{RZ, RWZ}. It turns out that appropriate choice of the level structure
is crucial for the existence of integral representations.

The solid torus should be thought of as a product of a disk and a circle. The circle appears due to the relation of K-theory of $\cX$ to cohomology of the loop space of $\cX$.
Solid torus partition functions have very simple representation and are morally elliptic central charges for counts of formal disks into the
critical locus of $W$ in the Artin stack $[V/G]$ (see also the discussion of vertex functions in~\cite{Nonabelian}, especially Section 3.2).
This relation is much more transparent in K theory than in cohomology.

Representation of central charges in terms of solid torus partition functions is closely related to the interpolation problem in elliptic
cohomology (see Theorem~\ref{th:EGRR}).
We follow the approach to elliptic cohomology~\cite{GKV} and conventions of~\cite{AO, Inductive, Nonabelian}.
Let $T$ be a torus acting on $V$, which induces a $T$-action on  the GIT quotient $\cX$. Then $T$-equivariant elliptic cohomology of a point is a product of elliptic curves $Ell_{T}(pt) = T/q^{\mathrm{cochar}(T)}$. $T$-equivariant elliptic cohomology of $\cX$ and $[V/G]$ are projective schemes over $Ell_{T}(pt)$. Stable locus inclusion
$\cX \subset [V/G]$ induces an inclusion map on the level of elliptic cohomology
$$ 
Ell_{T}(X) \subset Ell_T([V/G]) = Ell_{T \times G}(pt).
$$
An elliptic cohomology class on $\cX$ is a section of a line bundle over $Ell_{T}(X)$. Such a section
can be written in terms of theta functions. Solid torus partition function construction involves
interpolation of this section to a section of a line bundle $\cL$ over $Ell_{T \times G}(pt)$. If we enlarge the base $Ell_{T}(pt)$ by another
elliptic curve $E_{G}^{\vee}$, this interpolation exists and is unique under a degree
restriction on $\cL$ due to a basic fact about line bundles over abelian varieties. This additional degree of freedom corresponds to the K\"{a}hler
variable - variable that counts the degree in the K-theoretic $I$-function. The degree condition on the line bundle $\cL$ is called an elliptic
grade restriction rule since it is a natural generalization of the classical grade restriction rule. See, for example~\cite{HHP, HoriRomo, BFK, CIJS, AL1} for more information of the
classical grade restriction rule.

In Theorem~\ref{th:solidTorus} we show that elliptic central charge of an elliptic cohomology class has a solid torus partition function
representation that uses interpolation of that elliptic cohomology class. As a simple consequence we obtain the wall crossing
statement (Proposition~\ref{th:wallCrossing}). Wall crossing accounts to interpolating (elliptic cohomology class or corresponding central charge)
from one chamber and restricting it to another. Thus, analytic continuation of elliptic central charges is compatible with the elliptic
version of the grade restriction rule. We also show that elliptic central charges satisfy a certain
q-difference equation which we call quantum q-difference equation which is a 
q-analog of the Picard-Fuchs differential equation for the usual central charges.\\

We also discuss generalization of Landau-Ginzburg/Calabi-Yau correspondence. We study elliptic wall
crossing between degree $r$ hypersurface in an $n-1$-dimensional projective space (geometric phase) and 
a pair $([\bC^{n}/\mu_r], W)$ for a superpotential $W$ with a unique fixed point at the origin 
(Landau-Ginzburg phase).
When $\cO_{\bP^{n-1}}(-r)$ and $[\bC^n/\mu^r]$ are related by a crepant transformation when $n=r$.
This implies crepant wall crossing for the usual central charges. In particular, any central charge on
the one side can be represented as analytic continuation of a central charge on the other and vice versa.

For the elliptic central charges the situation changes: analogous property holds if $n=r^2$ for an
appropriate choice of the level structure whereas for $n=r$ the Landau-Ginzburg phase contains more
elliptic central charges than the geometric phase. This has the following geometric explanation.
Crepant transformation guarantees that K-theories of the respective phases have the same dimension
and one can construct an isomorphism using Fourier-Mukai transforms that reduce to the usual grade
restriction rule. K-theoretic $I$-functions and elliptic central charges are 
instead related to K-theory of inertia stack or elliptic cohomology correspondingly. 

\subsection{Notations}

\begin{enumerate}
	\item $\mu_r \simeq \bZ/r\bZ$ is a group of $r$-th roots of unity.
	\item $E[r] \simeq (\bZ/r\bZ)^2$ is a group of $r$-torsion points on the elliptic curve $E = \bC^*/q^{\bZ}$.
	\item Let $x \in X$. Then $[x] \in X/\sim$ denotes an equivalence class of $x$.
	\item For a given real number $x$ we denote its floor $\lfloor x \rfloor$ which is the largest integer not greater than $x$, and fractional part $\{x\} = x - \lfloor x \rfloor$.
	\item $pt \simeq \mathrm{Spec}(\bC)$ denotes a point.
	\item Let $T \simeq (\bC^*)^n$ be an algebraic torus with coordinates $t_1, \ldots, t_n$.
	Its character lattice is isomorphic to $\bZ^n$. Let $\alpha \in \bZ^n$. Then we use the notation $t^{\alpha}$ in the corresponding character, one-dimensional representation of $T$ defined by this character and corresponding element of $K_T(pt)$. In particular, $\bigoplus_{i} c_i t^{\alpha_i}, \;
	c_i \in \bZ$ denotes a $T$-representation and $\sum_i c_i t^{\alpha_i}$ - the corresponding K-theory
	class. Analytic functions in $t$ can be thought of as elements of completed K-theory by power series
	expansion.
	\item Let $G$ act on $X$ and $g \in G$. Then $X^g$ is a subset (scheme/stack) of $X$ fixed by $g$.
	\item If $H$ is a $\bZ$-module and $k$ is a field, then $H_k := H \otimes_{\bZ} k$. 
\end{enumerate}

\paragraph{\bf Acknowledgements}

We are grateful to Andrei Okounkov for useful remarks. The second named author is partially supported by  NSF DMS-1564497.

\section{Geometry of the model}

In this paper we consider Gauged Linear Sigma Models (GLSM) with the 1-dimensional gauge group $G$.
Recall that the GLSM data is given by the following 5-tuple: $(V, G, \bC^{*}_{R}, W, \omega)$.
Let $G \simeq \bC^{*}$. Then we can write $V = \bigoplus_{i=1}^{N} s^{D_{i}}$, where $s^{D_{i}}$ denotes a character of
$G$ and $D_{i} \in \bZ$. Without loss of generality we assume that

\begin{enumerate}
  \item All $D_{i} \ne 0$, 
  \item $D_{1}, \ldots, D_{N_{+}} >0,$ and $D_{N_{+}+1}, \ldots, D_{N} < 0$. We also denote $N_{-} = N-N_{+}$.
\end{enumerate}
In order to make the discussion cleaner we also assume that $0 < N_{+} < N$, so that the model has two nontrivial phases $\pm\omega > 0$.
We also introduce the notation $V = V_{+} \oplus V_{-}$, where the summands consist of positive (respectively negative) characters:
$V_{+} = \bigoplus_{i=1}^{N_{+}}s^{D_{i}}, \; V_{-} = \bigoplus_{i = N_{+}+1}^{N} s^{D_{i}}$.

The ambient spaces of both phases are total spaces of bundles over weighted projective spaces.
\begin{equation}
  \cX_{+} := [V \sslash_{1} G] = \bigoplus_{i > N_{+}} \cO(D_{i}) \to \bP[D_1: \ldots : D_{N_{+}}],
\end{equation}
where $\bP[D_1: \ldots : D_{N_{+}}]$ is a weighted projective space.
\begin{equation}
  \cX_{-} := [V \sslash_{-1} G] = \bigoplus_{i \le N_{+}} \cO(-D_{i}) \to \bP[-D_{N_{+}+1}, \ldots, -D_{N}],
\end{equation}

The classical example is when $D_{1} = D_{2} = \cdots = D_{n} = 1$ and $D_{N} = -r$. In this case we have
$\cX_{+} = \cO_{\bP^{n-1}}(-r)$ and $\cX_{-} = [\bC^{n}/\mu_{r}]$.
In the plus phase the GLSM describes quasimaps to a degree $r$ hypersurface in $\bP^{n-1}$ and in the minus phase FJRW theory of
$([\bC^{n}/\mu^{r}], W_{-})$ where $W_{-}$ is the restriction of superpotential.
In particular, if $n = r$, then the resulting GLSMs are Calabi-Yau. Below we consider the case $\omega > 0$ and
denote $\cX = \cX_{+}$ for concreteness since the other case $\omega < 0$ is exactly the same.\\

Let also $T$ be a maximal torus in $\mathrm{GL}(V)$ with coordinates $a_1, \ldots, a_N$
such that $V = \bigoplus a_i s^{D_i}$. Below we consider a $|\prod_i D_i|$-cover $T'$ of $T$
so that $a_i^{1/D_i}$ are coordinates on $T'$. Roots of equivariant variables $a_i$ should
be understood in this sense. 

\subsection{Inertia stack}

$\cX_{+}$ has $N_{+}$ torus fixed points $pt_{i}, \; i \le N_{+}$. Each of the points is a stacky point if $D_{i}>1$:
\begin{equation}
  pt_{i} = [(0\times \ldots \times 0 \times \underset{i}{\bC^{*}} \times 0\times \ldots \times 0)/\bC^{*}] \simeq [pt/\mu_{D_{i}}].
\end{equation}
Let $\Sigma$ be the toric fan of $\cX_{+}$. Set of cones is in one-to-one correspondence with the set of anticones $\cA_{+}$.
The correspodence is $\sigma = \sum_{i \in I'} \bR_{\ge 0}v_{i} \to I = [1..N] \backslash I'$.
Each cone corresponds to a closed substack $\cX_{+}(\sigma) \subset \cX_{+}$ defined by
\begin{equation}
  \cX_{+}(\sigma) = [(\{0\}^{I'} \times \bC^{I})^{ss}/G].
\end{equation}

Generic stabilizer $G_{\sigma}$ of $\cX_{+}(\sigma)$ is
\begin{equation}
  G_{\sigma} = \{g \in G \; | \; \{0\}^{I'} \times \bC^{I} \text{ is fixed under }g\}.
\end{equation}
If $\tau \subset \sigma$, then $\cX_{+}(\sigma) \subset \cX_{+}(\tau)$ and therefore $G_{\tau} \subset G_{\sigma}$.
The set $G_{\sigma}$ is naturally bijective to the set $\mathrm{Box}_{\sigma}$ \cite[Section 4]{BCS}, so we will use $\mathrm{Box}_{\sigma}$
and $G_{\sigma}$ interchangably. Define also $\mathrm{Box} = \bigcup_{\sigma \in \Sigma} \mathrm{Box}_{\sigma}$. This is a set of group elements
in $G$ that have nontrivial stabilizers on $V$ and thus produce stacky subvarieties in $\cX_{+}$. For a $v \in \mathrm{Box}$
denote by $g(v) \in G$ the corresponding group element. In our case $\mathrm{Box} = \bigcup_{i \le I_{+}} \mu_{D_{i}} \subset \bC^{*} \simeq G$.
\\

The inertia stack of a toric orbifold can be described using the box set:
\begin{equation}
  \cI\cX = \bigsqcup_{v \in \mathrm{Box}} \cX_{+,v}, \;\;\; \cX_{+,v} \simeq [(V^{ss})^{g(v)}/G].
\end{equation}
We denote the natural embedding $\cX_{v} \to \cX$ by $\iota_{v}$. \\

Given a line bundle $\cL \to \cX_{+}$ and $v \in \mathrm{Box}$ we have $g(v) \cdot \cL = e^{2\pi\sqrt{-1}c} \cL$ for a unique
  $0 \le c<1$. We define $\mathrm{age}_{v}(\cL) := c$ and $\mathrm{age}_{v}(\cL_{1} \oplus \cL_{2}) = \mathrm{age}_{v}(\cL_{1}) +
 \mathrm{age}_{v}(\cL_{2})$.

\subsection{Cohomology, K-theory and Elliptic cohomology}

All 3 cohomology theories satisfy Kirwan surjectivity. Let $u_{i} := c_{1}(a_{i}) \in H^{2}_{T}(pt)$ and $p = c_{1}(s)
\in H^{2}_{G}(pt)$. Then the cohomology and K-theory rings of $\cX_{+}$ with complex coefficients are given by
\begin{equation}
  H^{*}_{T}(\cX_{+})_{\bC} = \frac{\bC[u, p]}{\langle\prod_{i \le N_{+}} (D_{i}p-u_{i}) \rangle},
\end{equation}
\begin{equation}
  K_{T}(\cX_{+})_{\bC} = \frac{\bC[a^{\pm}, s^{\pm}]}{\langle \prod_{i\le N_{+}}(1-a_{i}^{-1}s^{-D_{i}}) \rangle}.
\end{equation}
We look at both cohomology and K-theory as structure sheaves over their (affine) spectra. Corresponding spectra are
cut out by equations $\prod_{i \le N_{+}} (D_{i}p-u_{i}) = 0$ and $\prod_{i\le N_{+}}(1-a_{i}^{-1}s^{-D_{i}})$ in
$\mathrm{Spec}(H^{*}_{T\times G}(pt)_{\bC}) \simeq \mathrm{Lie}(T) \oplus \mathrm{Lie}(G) \simeq \bC^{N+1}$ and
$\Spec K_{T\times G}(pt)_{\bC} \simeq T\times G \simeq (\bC^{*})^{N+1}$ respectively. All these schemes are defined over the spectrum of
$T$-equivariant cohomology/K-theory of a point.\\

Elliptic cohomology~\cite{GKV, AO} is a projective scheme, and elliptic cohomology classes are sections of sheaves over this scheme.
In particular, we define $E \simeq \bC^{*}/q^{\bZ}$ and for a torus $T$ we define $E_{T} \simeq T/q^{\mathrm{cochar}(T)} \simeq E^{\mathrm{rank}(T)}$.
Then
\begin{equation}
  Ell_{T \times G}(pt) \simeq E_{T} \times E_{G}.
\end{equation}

Equivariant elliptic cohomology of $\cX_{+}$ is a scheme that is
 cut out in $Ell_{T \times G}(pt)$ by the equation
\begin{equation} \label{eq:theta1}
  \prod_{i \le N_{+}} \theta(a_{i}^{-1}s^{-D_{i}}) = 0,
\end{equation}
where the theta function is defined as
\begin{equation}
  \theta(x) = \phi(x)\phi(q/x) = \prod_{i \ge 0}(1-q^{i}x)(1-q^{i+1}/x).
\end{equation}
The equation~\eqref{eq:theta1} geometrically is a zero locus of the section $\prod_{i \le N_{+}} \theta(a_{i}^{-1}s^{-D_{i}})$ of the line bunde
$\Theta(V_{+}^{\vee})\simeq \Theta(V_{+})$. This section is an (equivariant) elliptic Euler class of $V_{+}$. \\

Geometrically all 3 theories are transverse intersections of irreducible hypersurfaces, e.g. $1-a_{i}^{-1}s^{-D_{i}}, \; i \le N_{+}$ in K-theory.
This corresponds to the fixed point decomposition. Taking a finite cover of $T$ to allow roots of equivariant variables we get:
\begin{equation}
  (1-a^{-1}_{i}s^{D_{i}}) = \prod_{\zeta \in \mu_{D_{i}}}(1-\zeta a_{i}^{-1/D_{i}}s),
\end{equation}
where $K([pt/\mu_{D_{i}}]) \simeq \mu_{D_{i}}$.
\begin{equation}
  \theta(a^{-1}_{i}s^{D_{i}}) = \prod_{\zeta \in E[D_{i}]}\theta(\zeta a^{-1/D_{i}}s),
\end{equation}
where $Ell([pt/\mu_{D_{i}}]) \simeq E[D_{i}]$, and the notation $E[D_{i}]$ denotes the set of $D_{i}$-torsion points on $E$, that is solutions of
the equation $[x^{D_{i}}] = [1]$ on $E$; $E[D_i]$ is a finite subgroup of $E$ of order $D_i^2$. 

\subsection{Euler paring}

Let $\cF \in K_{T}(\cX_{+})$. Then we can compute $\chi(\cF)$ by equivariant localization.
\begin{equation}
  \chi(\cF) = \sum_{i \le N_{+}} \frac{\cF|_{pt_{i}}}{\Lambda^{*}T^{\vee}_{pt_{i}}} = \sum_{i \le N_{+}}\sum_{\zeta \in \mu_{D_{i}}} \frac{\cF}{\prod_{j \ne i \le N_{+}}(1-a_{j}^{-1}s^{-D_{j}})}\bigg|_{s = \zeta a_{i}^{-1/D_{i}}},
\end{equation}
where in the last formula we treat the expressions as rational functions in $s$ and evaluate them at specified values.
We can also rewrite this in the residue form:
\begin{equation} \label{eq:resEuler}
  \chi(\cF) = \sum_{k \le N_{+}}\sum_{\zeta \in \mu_{D_{k}}} \res_{s \to \zeta a_{k}^{-1/D_{k}}} \dd s \; \frac{\cF}{\Lambda^{*} V_{+}^{\vee}} =
  -(\res_{0}+\res_{\infty}) \; \dd s \; \frac{\cF}{\Lambda^{*} V_{+}^{\vee}},
\end{equation}
where we used that sum of residues of a rational differential form on a Riemann sphere is 0.\\

K-theoretic vertex function ($I$-function) takes values in the inertial K-theory $K_{T}(\cI\cX_{+})$ which is the correct object to consider once
once we have elliptic cohomology.\\

Let $\cF \in K_{T}(\cI\cX_{+})$ such that $\cF|_{\cX_v} = \cF_v$.  Then the Euler characteristic of $\cF$ is the sum of Euler characteristics over all
the components of inertia stack:
\begin{equation} \label{eq:orbEuler}
  \chi(\cF):= \sum_v\chi(\cX_v, \cF_v).
\end{equation}
Explicitly, we have
\begin{equation} \label{eq:orbEuler1}
  \chi(\cF) = \sum_{v \in \mathrm{Box}}\sum_{\substack{i \le N_{+}, \\ g(v)^{D_{i}} = 1}} \sum_{\zeta \in \mu_{D_i}} \frac{\cF_{v}}{\prod\limits_{\substack{j \ne i \le N_{+}, \\
    g(v)^{D_{j}}=1}} (1-a_{j}^{-1}s^{-D_{j}})}\bigg|_{s = \zeta a_{i}^{-1/D_{i}}}
\end{equation}

\section{Higgs branch: quasimap counting}

\subsection{LG quasimaps}

GLSM quasimaps are called Landau-Ginzburg quasimaps. Let $\Gamma \subset \GL(V)$ be a subgroup generated by $G$ and $\bC^{*}_{R}$. We view projection $\Gamma \to \bC^{*}_w$ as a character $\chi$ of $\Gamma$.
Such a map is captured by the commutative diagram:
\begin{equation}
  \begin{tikzcd}
    & \left[V /\Gamma\right] \arrow[d] \\
    \cC \arrow[ru, "f"] \arrow[r, "P"] \arrow["\omega^{log}",rd] & B\Gamma \arrow[d, "B\chi"] \\
    & B\bC^{*}_{w}
  \end{tikzcd}
\end{equation}
In the diagram above $\bC^*_w = \Gamma/G$.
Let us unwrap the data for $\cC \simeq \bP(a:1)$. The orbifold domain is required because the target space can have orbifold singularities, so we need at least one marked (possibly stacky) point.

\subsection{Geometry of the teardrop}
We represent
\begin{equation}
  \bP(a:1) = \{(x,y) \in \bC^{2} \backslash \{0\} \; | \; (x,y) \sim (\lambda^{a}x, \lambda y)\}.
\end{equation}
The orbifold point is $[\infty] := [1:0]$ and the other pole $[0]:=[0:1]$ is a usual point. This geometry is
sometimes called ``teardrop'' geometry. \\

We also consider a $\bC^{*}_{q}$ action on the teardrop:
\begin{equation}
  (x,y) \to (qx, y) \sim (x, yq^{-1/a}).
\end{equation}
Then $[0]$ and $[\infty]$ are the fixed points of this action. In what follows we will study
representation structure of bundles over the teardrop. We assign the weights $0$ and $-1/m$ to variables
$x,y$ in $\bC[x,y]$ (which is equivalent to a choice of linearization with respect to $\bC^{*}_{q}$ action).
The variable $q$ is called the loop variable.\\

Let $\cO_{\bP(a,1)}(1)$ denote the orbibundle with the total space

\begin{equation}
  \mathrm{tot}\, \cO_{\bP(a:1)}(1) = \{(x,y,z) \in (\bC^{2} \backslash \{0\}) \times \bC \; | \; (x,y,z) \sim (\lambda^{a}x, \lambda y, \lambda z)\}.
\end{equation}
We upgrade it to a $\bC^{*}_{q}$ equivariant line bundle as
\begin{equation}
  \cO_{\bP(a:1)}(n) \to \cO_{\bP(a:1)}((n \; \mathrm{mod} \; a)[\infty] + \lfloor n/a \rfloor [0]).
\end{equation}
We also use the following notations for the floor function and fractional part:
$\lfloor x\rfloor$ is the smallest integer smaller than $x$ and $\{x\} = x - \lfloor x\rfloor$.

We will be interested in cohomology and Euler characteristic of these sheaves.
Consider the maps
\begin{equation}
  \bP[a:1] \stackrel{\pi}{\to} \bP^{1} \to pt,
\end{equation}
where $\pi$ is the projection to the coarse moduli space.
We introduce the notation:
\begin{equation}
  \cO(n/a) = \cO_{\bP^{1}}(n/a) := \cO_{\bP[a:1]}((n \; \mathrm{mod} \; a)[\infty] + \lfloor n/a \rfloor [0])
\end{equation}
Sections of $\cO(n/a)$ are weighted polynomials of degree $n$ of the form
\begin{equation}
  \sum_{i=0}^{\lfloor n/a \rfloor} c_{i}x^{i}y^{n-ai},
\end{equation}
so
\begin{equation}
  H^{0}(\bP^{1}, \cO(n/a)) = \sum_{i=0}^{\lfloor n/a\rfloor} q^{-\{n/a\}-i} ,
\end{equation}
Using the Cech complex we can compute the $\bC^{*}_{q}$-equivariant Euler characteristic:
\begin{equation}
  \begin{aligned}
    \chi_{\bC^{*}_{q}}(\cO(n/a)) = &\frac{q^{\{-n/a\}}}{1-q^{-1}} + \frac{q^{-n/a}}{1-q} = \sum_{k \ge 0}q^{\{-n/a\}-k}-\sum_{k \ge 0}q^{-n/a-1-k} = \\
    = &\begin{dcases}
        \sum\limits_{k=0}^{\lfloor n/a\rfloor}q^{-\{n/a\}-k}, \;\;\; n \ge 0, \\
        0, \;\;\; n \in [-a, 0]\\
        \sum\limits_{k=0}^{-\lfloor n/a\rfloor-2}q^{k+1-\{n/a\}}, \;\;\; n < -a.
      \end{dcases}
  \end{aligned}
\end{equation}
Note that if $n \in [-a, -1]$, then $H^{0} = H^{1} = 0$.

Another way to compute the Euler characteristic is by the equivariant localization on the coarse moduli $\bP^{1}$:
\begin{equation}
    \chi_{\bC^{*}_{q}}(\bP^{1}, \cO(n/a)) = \frac{\cO(n/a)|_{[0]}}{\Lambda^{*}N_{[0]}^{\vee}}+\frac{\cO(n/a)|_{[\infty]}}{\Lambda^{*}N_{[\infty]}^{\vee}} 
    = \frac{q^{-\{n/a\}}}{1-q^{-1}} + \frac{q^{-n/a}}{1-q}.
\end{equation}

\subsection{Maps of formal disks}

We can formally consider domain as a $\bD := \mathrm{Spf}\bC[[y]]$ with $\bC^{*}_{q}$ action $y \to q^{-1/a}y$.
This can be thought of as a (formal) open chart
around $[1:0] = [\infty] \in \bP^{1}[a:1]$. We have
\begin{equation}
  \Gamma(\cO_{\bD}(n/a)) = \sum_{k \ge 0} c_{k}y^{n+ka},
\end{equation}
so
\begin{equation}
  \chi_{\bC^{*}_{q}}(\bD, \cO(n/a)) = \frac{q^{-n/a}}{1-q}.
\end{equation}

\subsection{Stacky loop space} 
Part of the data of a LG quasimap is a $G$-bundle $P$. Its degree is $P_{*}[\cC] \in H_{2}(B\Gamma), \; H_{2}(B\Gamma)_{\bQ} \simeq H_{2}(BG)_{\bQ} \oplus
H_{2}(B\bC^{*}_{R})_{\bQ} \simeq \mathrm{cochar}_{\bQ}(G) \oplus \bQ$. The quasimap condition fixes the second component to be just one,
so that we have $\beta_{\Gamma} = (\beta, 1)$. Let $P_{\beta} \to \bP[a:1]$ be the corresponding principal $\Gamma$-bundle. \\

Define quantum version of the space $V$:
\begin{equation}
  \cV_{\beta} := \bR\Gamma(P_{\beta} \times_{\Gamma} V), \;\;\; V_{\beta}:=H^{0}(\cV_{\beta}), \; W_{\beta}:=H^{1}(\cV_{\beta}).
\end{equation}
Morally, $V_{\beta}, W_{\beta}$ are deformation and obstruction spaces for LG quasimaps. In particular, the quantum loop space with obstruction bundle
is defined as
\begin{equation}
  \cX_{\beta}:=[V_{\beta} \sslash_{\omega} G], \;\;\; \mathrm{Ob}_{\beta}:=[V_{\beta}^{\omega-ss} \oplus W_{\beta}/G].
\end{equation}
We also define the subspace of the quasimaps non-singular at infinity $\cX_{\beta}^{\circ}$. Let
\begin{equation}
  V_{\beta}^{\circ}:=\{s \in V_{\beta} \; | \; s(1:0) \in V^{\omega-ss}\}.
\end{equation}
We note that the last condition is independent of the trivialization of $P_{\beta} \times_{\Gamma} V$ because the unstable locus is a cone in $V$.
Then $\cX_{\beta}^{\circ} := [V_{\beta}^{\circ} \sslash_{\omega} G]$. Advantage of using this space is the fact that there is an evaluation map at
infinity:
\begin{equation}
  ev_{\infty} \; : \; \cX^{\circ}_{\beta} \to \cX_{v(\beta)}.
\end{equation}

We note that due to the fact that we consider $\cC$ of genus 0 with one marked point and $\epsilon=0^{+}$ condition our quasimap moduli space is
very simple, in particular, it is an explicit smooth DM-stack and the obstruction bundle is a vector (orbi-)bundle.\\

Let us compute $V_{\beta}, W_{\beta}$ as $\bC^{*}_{q}$-representations. As a $T_{(a)}\times G_{s}\times \bC^{*}_{w}$-representation we have:
\begin{equation}
  V = \sum_{i=1}^{N} a_{i}s^{D_{i}} w^{q_i/2},
\end{equation}
For the quantum version we have compute
\begin{equation}
  \begin{aligned}
    &V_{\beta} =\sum_{i=1}^{N}H^{0}\left(\bP^{1}, \cO(D_{i} \beta-q_{i}/2)\right) , \\
    &W_{\beta} =\sum_{i=1}^{N}H^{1}\left(\bP^{1}, \cO(D_{i} \beta-q_{i}/2)\right) .
  \end{aligned}
\end{equation}
Let \
$$ d_{i}(\beta) := D_{i} \beta - q_{i}/2.$$

Using the formulas for the Euler characteristic from above from we compute:
\begin{equation} \label{eq:defObs}
  \begin{aligned}
    &V_{\beta} =\sum_{i=1}^{N}\left(a_{i}s^{D_{i}}\sum_{k=0}^{\lfloor d_{i}(\beta)\rfloor} q^{-i-\{d_{i}(\beta)\}} \right), \\
    &W_{\beta} =\sum_{i=1}^{N}\left(a_{i}s^{D_{i}}\sum_{k=0}^{-\lfloor d_{i}(\beta)\rfloor-2} q^{i+1-\{d_{i}(\beta)\}} \right).
  \end{aligned}
\end{equation}
Note, that $\{d_{i}(\beta)\} = \mathrm{age}_{v(\beta)}(U_{i}^{-1})$.

Define a vector subspace $Z_{\beta} \subset V$ by the equation
\begin{equation}
  Z_{\beta} = \bigcap_{i, \; d_{i}(\beta) \notin \bZ_{\ge 0}} \{x_{i} = 0\}.
\end{equation}
Value of a LG quasimap of degree $\beta$ must be in this subspace. We also define
\begin{equation}
  \cF_{\beta} := [Z_{\beta} \sslash_{\omega} G] \subset \cX_{\beta}.
\end{equation}
We use $\bC^{*}_{q}$-localization to compute the vertex function. $(\cX^{\circ}_{\beta})^{\bC^{*}_{q}}$ consists of constant maps. Moreover, we have
\begin{equation}
  ev_{\infty} \; : \; (\cX^{\circ}_{\beta})^{\bC^{*}_{q}} \simeq \cF_{\beta} \subset \cX_{v(\beta)},
\end{equation}
where the map $\beta \to v(\beta)$ is a natural map constructed as follows. $\beta \in H_{2}(BG)_{\bQ} \simeq \mathrm{cochar}(G)_{\bQ}$ defines a group element
$g(\beta) \in G$ by the formula $e^{2\pi\sqrt{-1}\beta}$. $\cX_{v(\beta)}$ is the inertia stack component fixed by $g(\beta)$.

\subsection{Virtual localization and  $I$-function}
$\bC^{*}_{q}$ acts fiberwise on bundles over $\cF_{\beta}$.
\begin{equation}
  \begin{aligned}
    &\mathrm{Def}_{\beta} := T_{\cX_{\beta}^{\circ}}|_{\cF_{\beta}} = \mathrm{Def}_{f}+\mathrm{Def}_{m}, \\
    &\mathrm{Ob}_{\beta}|_{\cF_{\beta}} = \mathrm{Ob}_{f}+\mathrm{Ob}_{m},
  \end{aligned}
\end{equation}
where the fixed part $f$ and the moving part $m$ denote the subspaces of weight zero and nonzero under the $\bC^{*}_{q}$ action.

We define the the vertex function or K-theoretic I-function as a pushforward of the localized virtual structure sheaf:
\begin{equation} \label{eq:virtLoc}
  I^{K} := \sum_{\beta \in \mathrm{Eff}} \frac{\theta(z^{-1})}{\theta(q^{-\{\beta\}}s^{-1}z^{-1})}z^{\lfloor\beta\rfloor}(ev^{\beta}_{\infty})_{*} \left( \frac{\cO_{\cF_{\beta}}}{\Lambda^{*} N^{\vee}_{vir}}\right) \cdot \cO_{\cX_{v(\beta)}},
\end{equation}
where $s \in G$ is a variable dual to $z$ and

\begin{enumerate}
  \item The summation range $\mathrm{Eff}$ is a set of effective curve classes. In our case it is
        $$\mathrm{Eff} = \bigcup_{i \le N_{+}} \frac{1}{D_{i}}\bZ_{\ge 0}, $$
        where the summation is over the fixed points in $\cX_{+}$.
  \item  The virtual structure sheaf of $\cF_{\beta}$ defined as
        \begin{equation}
          \cO_{\cF_{\beta}} \otimes \Lambda^{*} \mathrm{Ob}_{f}.
        \end{equation}
        Formula~\eqref{eq:defObs} to see that $W_{\beta}$ does not have a fixed part, so it is equal to the usual structure sheaf
        $\cO_{\cF_{\beta}}$.
  \item $N_{vir} = \Def_{m}-\Ob_{m}$ is a virtual normal bundle.

  \item $\cO_{\cX_{v(\beta)}}$ is the structure sheaf of the corresponding component of the inertia stack of $\cX$.
\end{enumerate}
\begin{remark}
  The significance of the prefactor in theta functions and floor function in the power will become clear later. For now we note the following
  relevant facts:
  \begin{enumerate}
    \item The function
          \begin{equation}
            \frac{\theta(z^{-1})}{\theta(q^{-\{\beta\}}s^{-1}z^{-1})}
          \end{equation}
          should be thought as an elliptic version of $z^{\{\beta\}}s^{-\ln(z)/\ln(q)}$ which is the usual prefactor in the I-functions.
    \item Let $T_{z}$ be an operator $f(z) \to f(qz)$. We have the following relation:
          \begin{equation}
            T_{z} \left( \frac{\theta(z^{-1})}{\theta(q^{-\{\beta\}}s^{-1}z^{-1})}z^{\lfloor\beta\rfloor} \right) =
            (q^{\beta}s) \cdot \frac{\theta(z^{-1})}{\theta(q^{-\{\beta\}}s^{-1}z^{-1})}z^{\lfloor\beta\rfloor}.
          \end{equation}
          This is the same transformation property as
          \begin{equation}
            T_{z}(z^{\beta}s^{\frac{-\ln(z)}{\ln(q)}}) = (q^{\beta}s) \cdot z^{\beta}s^{\frac{-\ln(z)}{\ln(q)}}.
          \end{equation}
  \end{enumerate}
\end{remark}

\begin{remark}
  In the definition of the I-function the exact form of the prefactor with theta functions is not important. Indeed, every meromorphic
  section $s$ of the same
  line bundle over $\cE_{G} \times \cE_{G}^{\vee}$ would be as good as $\theta(z^{-1})/\theta(q^{-\{\beta\}} s^{-1} z^{-1})$ for the results below
  (this section must be compatible
  with orbifold elliptic Chern character, see definition~\ref{def:orbEllipticChern}).
  This line bundle is degree zero over each component and is nontrivial. More precisely,
  it is defined by the transformation properties in the remark above.
  However, fractional powers and logarithms will ruin the integral representation
  below. In fact, one can even choose a different nontrivial degree 0 over each component line bundle, but then the q-difference equations satsified
  by the I-function will change as well.
\end{remark}

In order to compute the I-function we compute the virtual normal bundle first.
Below we will use the following formulas:
\begin{equation}
  \Lambda^{*}\left(\frac{q^{x}}{1-q}\right) = \Lambda^{*}(\sum_{k\ge 0} q^{x+k}) = \prod_{k\ge 0}(1-q^{x+k}) = \phi(q^{x}),
\end{equation}
and
\begin{equation}
  \Lambda^{*}\left(\frac{q^{x}}{1-q^{-1}}\right) = \Lambda^{*}(-\sum_{k\ge 0} q^{x+1}) = \prod_{k\ge 0}\frac{1}{(1-q^{x+1+k})} = \frac{1}{\phi(q^{x+1})}.
\end{equation}
Moreover,
\begin{equation}
  \begin{aligned}
    &\chi_{\bC^{*}_{q}}(\bP^{1}, \cO(x))  =
       \frac{\phi(q^{-x})}{\phi(q^{1-\{x\}})}, \\
    &\det \, (a\chi_{\bC^{*}_{q}}(\bP^{1}, \cO(x))) =
    \begin{dcases}
      & (aq^{-\{x\}})^{\lfloor x\rfloor+1}q^{-\frac{\lfloor x\rfloor(\lfloor x\rfloor+1)}{2}}, \;\;\; x > 0, \\
      & (aq^{1-\{x\}})^{\lfloor x\rfloor+1}q^{-\frac{(\lfloor x\rfloor+2)(\lfloor x\rfloor+1)}{2}}=
      (aq^{-\{x\}})^{\lfloor x\rfloor+1}q^{-\frac{\lfloor x\rfloor(\lfloor x\rfloor+1)}{2}}, \;\;\; x < -1,
    \end{dcases}
  \end{aligned}
\end{equation}
We can rewrite the last formula using theta functions:
\begin{equation}
  \det(a \chi_{\bC^{*}_{q}}(\bP^{1}, \cO(x))) =
    \frac{\theta(-aq^{-x})}{\theta(-aq^{1-\{x\}})} = (-1)^{\lfloor x\rfloor}\frac{\theta(aq^{-x})}{\theta(aq^{1-\{x\}})}.
\end{equation}

Recall that
\begin{equation}
  \iota_{v} \; : \; \cX_{v} \to \cX
\end{equation}
is the embedding of the inertia stack component $\cX_{v}$ into $\cX$.

We have:
\begin{equation}
  \Def-\Ob = \sum_{i} \iota^{*}_{v}a_{i}s^{D_{i}}\left(\frac{q^{-d_{i}(\beta)}}{1-q}+\frac{q^{-\{d_{i}(\beta)\}}}{1-q^{-1}}\right) =
  \sum_{i} \iota^{*}_{v}a_{i}s^{D_{i}}\left(\frac{q^{-d_{i}(\beta)}}{1-q}-\frac{q^{1-\{d_{i}(\beta)\}}}{1-q}\right), \;\;\; v = v(\beta).
\end{equation}
The fixed part is just the part that does not depend on $q$. For each $i$ such summands appear only when $d_{i}(\beta) \in \bZ_{\ge 0}$.
Note that $N_{\cF_{\beta}/\cX} = \sum_{i, \; d_{i}(\beta) \notin \bZ \ge 0}a_{i}s^{D_{i}}$ and
$$N_{\cF_{\beta}/\cX_{v(\beta)}} = \sum_{i, \; d_{i}(\beta) \in \bZ < 0}a_{i}s^{D_{i}}.$$
Then using that $\{-x\}=1-\{x\}$ if $x \notin \bZ$ and $\{-x\} = \{x\}=0$ if $x \in \bZ$ we can write the moving part as:
\begin{equation}
  (\Def-\Ob)_{m} = \sum_{i} \iota^{*}_{v}a_{i}s^{D_{i}}\left(\frac{q^{-d_{i}(\beta)}}{1-q}-\frac{q^{\{-d_{i}(\beta)\}}}{1-q}\right)+N_{\cF_{\beta}/\cX_{v(\beta)}}.
\end{equation}
The (virtual) bundles $N_{\cF_{\beta}/\cX_{v(\beta)}}$ and $(\Def-\Ob)_{m}$ are defined on $\cF_{\beta}$, but actually they are pullbacks of bundles
from $\cX$ via the embedding $\cF_{\beta} \subset \cX_{v(\beta)}$. Thus, we can use the formula $\iota_{*}\iota^{*} \alpha =
\alpha/\Lambda^{*}N_{\iota}$ to compute the pushforwards. Abusing notations by denoting the bundles on $\cX_{v(\beta)}$ by the same letter we compute:
\begin{equation}
  (ev^{\beta}_{\infty})_{*}\left( \frac{\cO_{\cF_{\beta}}}{\Lambda^{*}(\Def_{m}-\Ob_{m})^{\vee}} \right) =
  \frac{\Lambda^{*}N^{\vee}_{\cF_{\beta}/\cX_{v(\beta)}}}{\Lambda^{*}(\Def_{m}-\Ob_{m})^{\vee} \otimes \Lambda^{*}N^{\vee}_{\cF_{\beta}/\cX_{v(\beta)}}}
   = \prod_{i}\frac{\phi(\iota^{*}_{v}a^{-1}_{i}s^{-D_{i}}q^{\{-d_{i}(\beta)\}})}{\phi(\iota^{*}_{v}a^{-1}_{i}s^{-D_{i}}q^{-d_{i}(\beta)})}.
\end{equation}
Therefore, the vertex function is
\begin{equation}
  I^{K} = \sum_{\beta \in \mathrm{Eff}}\frac{\theta(z^{-1})}{\theta(q^{-\{\beta\}}s^{-1} z^{-1})} z^{\lfloor\beta\rfloor}\prod_{i}\frac{\phi(\iota^{*}_{v(\beta)}a^{-1}_{i}s^{-D_{i}}q^{\{-d_{i}(\beta)\}})}
  {\phi(\iota^{*}_{v(\beta)}a^{-1}_{i}s^{-D_{i}}q^{-d_{i}(\beta)})}\cdot \cO_{\cX_{v(\beta)}}.
\end{equation}

It is more convenient to work with the $\cH^{K}$-function:
\begin{equation} \label{eq:HK}
  \cH^{K} := I^{K} \cdot \hat{\Gamma}_{K} = \sum_{\beta \in \mathrm{Eff}}\frac{\theta(z^{-1})}{\theta(q^{-\{\beta\}}s^{-1}z^{-1})} z^{\beta}\frac{\prod_{i, \; \mathrm{age}_{v}(U^{i}) = 0}(1-\iota^{*}_{v}a^{-1}_{i}s^{-D_{i}})}{\prod_{i}\phi(\iota^{*}_{v(\beta)}a^{-1}_{i}s^{-D_{i}}q^{-d_{i}(\beta)})} \cdot
 \cO_{\cX_{v(\beta)}} ,
\end{equation}
where we introduced the K-theoretic Gamma-class
\begin{equation}
  \hat{\Gamma}_{K}:=\sum_{v \in \mathrm{Box}}\frac{\prod_{i, \; \mathrm{age}_{v}(U^{i}) = 0}(1-\iota^{*}_{v}a^{-1}_{i}s^{-D_{i}})}{\prod_{i \le N}\phi(\iota^{*}_{v}a_{i}^{-1}s^{-D_{i}}q^{-\mathrm{age}(U^{i})})} \cdot \cO_{\cX_{v}}.
\end{equation}
It is tempting to use the following interpretation of the $\cH^{K}$-function.
One gets the formula~\eqref{eq:HK} directly from the virtual localization formula~\eqref{eq:virtLoc} if we replace the Euler characteristic of
maps of spheres to the one of formal maps of disks:
\begin{equation}
  \chi_{\bC^{*}_{q}}(\bP^{1}, \cO_{\bD}(x)) \to \chi_{\bC^{*}_{q}}(\bD, \cO(x)), \;\;\; \frac{q^{-x}}{1-q}+\frac{q^{-\{x\}}}{1-q^{-1}} \to
  \frac{q^{-x}}{1-q}.
\end{equation}

\subsection{Level structure}
In quantum K-theory there is another way to modify the $I$-function which is called the level structure. Let $\cC_{\cX_{\beta}}
\stackrel{\pi_{\cC}}{\to} \cX_{\beta}$
be the universal curve and $f_{\cC} \; : \; \cC_{\cX_{\beta}} \to [V/\Gamma]$ be the universal section.
We can pullback elements of $K_{T}([V/\Gamma])$ to the universal curve and push them to the moduli space via projection.
Let $R = \sum_{\bar{l},n,m}R_{\bar{l},n,m}a^{\bar{l}}s^{n}w^{n} \in K_{T}([V/\Gamma])
\simeq K_{T\times\Gamma}(pt)$, where $a^{\bar{l}}, s^{n}$ and $w^{m}$ denote the characters of $T \times G \times \bC^{*}_{w}$ and $R_{\bar{l},m,n} \in \bZ$.
We define the level $R$ K-theoretic $I$-function to be
\begin{equation}
  I^{K}_{R} := \sum_{\beta \in \mathrm{Eff}} (-1)^{\sigma_{R}(\beta)}\frac{\theta(z^{-1})}{\theta(q^{-\{\beta\}}sz^{-1})}z^{\lfloor\beta\rfloor}(ev^{\beta}_{\infty})_{*} \left( \frac{\cO_{\cF_{\beta}}\otimes \pi_{\cC, *}(\det f_{\cC}^{*} \, R)}{\Lambda^{*} N^{\vee}_{vir}}\right) \cdot \cO_{\cX_{v(\beta)}},
\end{equation}
where $\sigma_{R}(\beta) := \sum_{\bar{l},m,n} R_{\bar{l},m,n}\lfloor m\beta-n\rfloor$.
In order to compute the I-function we first compute $\det f_{\cC}^{*} R$. We compute
\begin{equation}
  f_{\cC}^{*} R = \sum_{\bar{l},m,n} R_{\bar{l},m,n}a^{\bar{l}}\chi_{\bC^{*}_{q}}(\bP^{1}, \cO(m\beta-n)),
\end{equation}
and
\begin{multline}
    \det f_{\cC}^{*} R = \prod_{\bar{l},m,n}\left(\frac{\theta(-a^{\bar{l}}s^{m}q^{-m\beta+n})}{\theta(-a^{\bar{l}}s^{m}q^{1-\{m\beta-n\}})}\right)^{^{R_{\bar{l},m,n}}} =
    (-1)^{\sigma_{R}(\beta)} \prod_{\bar{l},m,n}\left(\frac{\theta(a^{\bar{l}}s^{m}q^{-m\beta+n})}{\theta(a^{\bar{l}}s^{m}q^{1-\{m\beta-n\}})}\right)^{^{R_{\bar{l},m,n}}} = \\
    = (-1)^{\sigma_{R}(\beta)}\prod_{\bar{l},m,n}
    \prod_{m\beta-n \in \bZ}(-a_{\bar{l}}s^{m})^{R_{\bar{l},m,n}}\times \left(\frac{\theta(a^{\bar{l}}s^{m}q^{-m\beta+n})}{\theta(a^{\bar{l}}s^{m}q^{\{-m\beta+n\}})}\right)^{^{R_{\bar{l},m,n}}},
\end{multline}
where we used the relation $1-\{x\} = \{-x\}$ if $x \notin \bZ$.
Notice that in the cohomological limit $q \to 1, \; a \to 1$ this becomes trivial. In K-theory this expression is elliptic in
nature.\\

We also define the level $R$ K-theoretic Gamma class:

\begin{equation}
  \hat{\Gamma}_{K, R}:=\sum_{v \in \mathrm{Box}}\prod_{\substack{\bar{l},m,n \\ g(v)^{n}e^{-2\pi\sqrt{-1} mq_{i}/2} =1}}(-\iota^{*}_{v}a_{\bar{l}}s^{n})^{-R_{\bar{l},m,n}}\frac{\prod_{i, \; \mathrm{age}_{v}(U^{i}) = 0}(1-\iota^{*}_{v}a_{i}^{-1}s^{-D_{i}})}{\prod_{i \le N}\phi(\iota^{*}_{v}a_{i}^{-1}s^{-D_{i}}q^{-\mathrm{age}(U^{i})})} \cdot \cO_{\cX_{v}}.
\end{equation}
In the most important case $R = V_{+}^{\vee} = \sum_{i \le N_{+}}a_{i}^{-1}s^{-D_{i}}$ the product over $\bar{l},n,m$ reduces to $\prod_{i, \; \mathrm{age}_{v}(U_{i})=0}(-\iota^{*}_{v}a_{i}s^{D_{i}})$
and
\begin{equation}
  \hat{\Gamma}_{K, V_{+}^{\vee}} = \sum_{v \in \mathrm{Box}}\frac{\prod_{i, \; \mathrm{age}_{v}(U^{i}) = 0}(1-\iota^{*}_{v}a_{i}s^{D_{i}})}{\prod_{i \le N}\phi(\iota^{*}_{v}a_{i}^{-1}s^{-D_{i}}q^{-\mathrm{age}(U^{i})})} \cdot \cO_{\cX_{v}}
\end{equation}

Correspondingly, the level $R$ H-function is
\begin{equation} \label{eq:hFunction}
  \cH^{K}_{R}:=I^{K}_{R} \cdot \hat{\Gamma}_{K, R}.
\end{equation}

\subsection{Central charges}

Similarly to the cohomological case, the natural object from the mirror symmetry point of view is not the I-function itself, but rather its pairing
with some B-model branes. In the cohomological case such an object was the central charge. Recall, that if $\cH$ is a cohomological $H$-function
\cite{Iritani}, then one can construct a central charge of $[\cB] \in K(\cX)$ or more generally of $\cB \in D^{b}(\cX)$:
\begin{equation}
  Z(\cB, R):=\langle \cH, \ch([\cB]^{\vee}) \rangle,
\end{equation}
where in the orbifold case we use orbifold intertia stack valued Chern character and orbifold cohomology pairing~\cite{CR}.

In the K-theory we need to upgrade the space of branes $K(\cX)$ and the pairing. It turns out, that it is most natural to
upgrade the K-theory $K(\cX)$ to some version of (equivariant) elliptic cohomology~\cite{Grojnowsky}.

\paragraph{\bf Elliptic branes}

 Let $\cL$ be a line bundle over
$E_T \times E_G$. Then its total space can be represented as
\begin{equation}
	\bC_{\chi(\cL)} \times T \times G/q^{\mathrm{cochar}(T \times G)},
\end{equation}
where $\chi(\cL)$ is a character of $G \times T$ that specifies the action on $\bC$. (Meromorphic)
sections of $\cL$ can be represented by quasiperiodic functions on $T \times G$ such that
for a given cocharacter $\sigma \in \mathrm{cochar}(T \times G)$:
\begin{equation} \label{eq:quasiPer}
	f(q^{\sigma}t) = \sigma(t)^{-1} q^{-\frac{|\sigma|^2}{2}}(-q^{-\frac12})^{\langle \sigma, \chi \rangle}f(t).
\end{equation}
Note that $\pi_1(E_T \times E_G)$ acts on the sections by rescaling according to the formula above.
We want to fix an identification of sections of $\cL$ with quasiperiodic functions.
We call an elliptic brane $\cE$ a section of a line bundle $\cL \to Ell_{T}(\cX)$ with such an 
identification.
Abusing notations below we assume that every elliptic cohomology class is
promoted to an elliptic brane, i.e., it corresponds to a fixed quasiperiodic functions.\\

As an example consider an elliptic curve $E$ with coordinate $x$ and the
line bundle $\cL = \cO([1]) \to E_{x}$. Then theta functions $\theta(q^{n}x)$ for different $n$
define the same section of this bundle. The transformation property of the theta function:
\begin{equation}
	\theta(q^nx) = (-x)^{-n}q^{-\frac{n(n-1)}{2}}\theta(x) = x^{-n}q^{-\frac{n^2}{2}}(-q^{-\frac12})^n
	\theta(x)
\end{equation}
is a particular case of the general transformation~\eqref{eq:quasiPer}.
A choice of an elliptic brane structure is a choice of $n \in \bZ$ in this case. \\

\paragraph{\bf Elliptic Chern character}

Recall, that $Ell_{T}(\cX)$ is a scheme over $Ell_{T}(pt) \simeq E_{T} \simeq T/q^{\mathrm{cochar}(T)}$.
Kirwan surjectivity implies
\begin{equation}
	Ell_{T}(\cX) = \{\prod_{i \le N_{+}} \theta(a_{i}^{-1}s^{-D_{i}}) = 0 \} \subset Ell_{T\times G}(pt).
\end{equation}
Elliptic cohomology elements on $\cX$ are sections of line bundles $\cL \to Ell_{T}(\cX)$. Sections of such a bundle can be represented by theta functions.

Consider the following diagram:
\begin{equation}
  \begin{tikzcd}
    \Spec(K_{T}(\cX)) \arrow[d] \arrow[r, "\ch_{K \to E}"] &Ell_{T}(\cX) \arrow[d] \\
    T \arrow[r, "\mathrm{mod} \; q^{\mathrm{cochar}(T)}"]& E_{T}
  \end{tikzcd}
\end{equation}
where
\begin{equation}
  \ch_{K \to E} \; : \; \Spec(K_{T}(pt)) \to Ell_{T}(pt)
\end{equation}
is the natural map making the diagram commute. This map is induced by the map
$$\Spec(K_{T}([V/G])) \simeq T \times G \;\;\; \stackrel{\text{mod }q}{\longrightarrow} \;\;\; E_{T} \times E_{G} \simeq Ell_{T}([V/G]).$$

Let $\cL \to Ell_{T}(\cX)$ be a line bundle. Sections of such line bundles are elliptic cohomology elements.
Then we can use pull-back via $\ch_{K \to E}$ to construct elliptic
version of Chern character map:
\begin{equation}
	(\ch_{K \to E})^{*} \; : \; \cL \to (\ch_{K \to E})^{*} \cL.
\end{equation}

 Moreover, $\Gamma(\hat{\cO}_{\mathrm{Spec}(K_T(\cX))})$ is isomorphic
to $\hat{K}_{T}(\cX)$, but the isomorphism is not canonical.
The notation $\hat{K}_{T}(\cX)$ denotes completion in the variable $q$. It might be necessary because
the map $\ch_{K \to E}$ is of infinite index. In other words, theta functions are transcendental functions.

A choice of elliptic brane structure on
sections of $\cL$ 
provides an embedding $\Gamma(\ch_{K \to E})^* \cL \subset \hat{K}_T(\cX)$ by sending a section
of $\cL$ to the quasiperiodic function on $\Spec(K_T(\cX)) \subset T \times G$ representing it.
Thus, we get
 \begin{equation}
 	(\ch_{K \to E})^{*} \; : \; \Gamma(\cL) \to \Gamma((\ch_{K \to E})^{*} \cL) \subset \hat{K}_{T}(\cX).
 \end{equation}


\paragraph{\bf Orbifold elliptic Chern character}

In the case of DM stacks it is more natural to define the image of the Chern character map to be the
inertial K-theory $\hat{K}(\cI\cX)$ instead of the usual one. This is parallel to how the usual Chern
character for DM stacks maps to the orbifold cohomology instead of the usual cohomology.
This is consistent with the fact that
\begin{equation}
  \Spec(H^{*}([pt/\mu_{r}])) = pt, \;\;\; \Spec(K([pt/\mu_{r}])) \simeq \mu_{r}, \;\;\;
  Ell([pt/\mu_{r}]) \simeq E[r].
\end{equation}
So if we want Chern character maps to be isomorphisms, we need to add twisted sectors to their image.\\

Recall, that we have $\cI \cX \simeq \bigsqcup_{v \in \mathrm{Box}} \cX_{v}$, and each $v$ corresponds to $g = g(v) \in G$ such that
$\cX_{v} = [V^g \sslash_\omega G]$. Let $\langle g \rangle \simeq \mu_{r}$ be a subgroup in
$G$ generated by $g$, where $r = \mathrm{ord}(g)$. Let us choose a logarithm of $g$ such that
$\log(g)/2\pi\sqrt{-1} \in \frac1r\bZ \cap [0,1)$.
We define the action of $\tilde{\mu}_{r} \simeq \frac1r\bZ$ on $G$ by $k \to q^{k\log(g)/2\pi\sqrt{-1}}$.
This reduces to the action of $\mu_{r}$ on $E_{G}$.

More abstractly we have
\begin{equation}
	Ell_{\langle g \rangle}(pt) \subset Ell_{T \times G}(pt).
\end{equation}
In particular, since $Ell_{T \times G}(pt)$ is an abelian variety and
$Ell_{\langle g \rangle}(pt)$ is its abelian subgroup, so $Ell_{\langle g \rangle}(pt)$ acts on
$Ell_{T \times G}(pt)$. This action preserves $Ell_T(\cX_v)$, where the latter is cut out in
$Ell_{T \times G}(pt)$ by the equation:
\begin{equation}
	\prod_{i \le N_+, \; \mathrm{age}_v(U_i) =0 }\theta(U_i) = 0.
\end{equation}
The equation is preserved by the action due to the condition $\mathrm{age}_v(U_i) =0$ and
 quasiperiodicity of theta functions.
This provides the action of $Ell_{\langle g \rangle}(pt) \simeq E[\mathrm{ord}(g)]$ on $Ell_{T}(\cX_v)$.
 
We have $E[\mathrm{ord}(g)] \simeq \mu_{\mathrm{ord}(g)}^{2}$ with generators $[g]$ and
$[q^{\ln(g)/2\pi\sqrt{-1}}]$ where the brackets denote an equivalence class modulo $q^{\bZ}$.
 As discussed above, we can upgrade it to the action of $\tilde{E}[\mathrm{ord}(g)] = \mu_r
  \times \frac{1}{r}\bZ$
on quasiperiodic functions on $\Spec(K_{T}(\cX)) \subset T \times G$. We denote this action by
$(l,k) \to g^{l}q^{k\log(g)/2\pi\sqrt{-1}}$.

Now we are ready to define the orbifold elliptic Chern character:
\begin{definition} \label{def:orbEllipticChern}
  Let $\cE$ be a $T$-equivariant elliptic brane on $\cX$. Then we define its elliptic Chern character to be
  \begin{equation}
    \ch^{E \to K}_{\mathrm{orb}}(\cE) := \sum_{v \in \mathrm{Box}} (q^{-\log(g(v))/2\pi\sqrt{-1}})^{*} (\ch_{K \to E})^{*}(\cE|_{\cX_{v}}) \cdot \cO_{\cX_{v}}.
  \end{equation}
\end{definition}
\begin{remark}
  The construction is parallel to the usual orbifold Chern character~\cite{Kawasaki}, but is formulated in a different
  language.

  Let $\cX$ be a smooth DM stack as above and $\cI\cX = \bigsqcup_{v \in \mathrm{Box}} \cX_{v}$ be its inertia stack.
  Let $\cL \to \cX$ be an orbifold line bunde. Then
  \begin{equation}
    \ch_{\mathrm{orb}}(\cL) = \sum_{v \in \mathrm{Box}} e^{2\pi\sqrt{-1} \mathrm{age}_{v}(\cL)} \, \ch(\cL|_{\cX_{v}}) \cdot \one^{H}_{v} \in
    H^{*}_{\mathrm{orb}}(\cX),
  \end{equation}
  where $0 \le \mathrm{age}_{v}(\cL) < 1$ is defined such that eigenvalue of $g(v)$ on $\cL|_{\cX_{v}}$ is
  $\exp(2\pi\sqrt{-1}\mathrm{age}_{v}(\cL))$ and
  $\one^{H}_{v}$ is one in the corresponding twisted sector.\\

  Let $\exp \; : \; \mathrm{Lie}(T) \to T$ be the exponential map. Consider the diagram
  \begin{equation}
    \begin{tikzcd}
      \Spec(H^{*}_{T}(\cX)) \arrow[d] \arrow[r, "\ch_{H \to K}"] &\Spec(K_{T}(\cX)) \arrow[d] \\
      \mathrm{Lie}(T) \arrow[r, "\exp"]& T
    \end{tikzcd}
  \end{equation}
  Then usual Chern character is
  \begin{equation}
    \ch(V) = (\ch_{H \to K})^{*} V,
  \end{equation}
  where we interpret $K(\cX)$ as $\Gamma(\cO_{\Spec(K(\cX))})$.
  Now, let $\cX_{v}$ be an inertia stack component. There is a natural $\langle g(v) \rangle$-action on
  $K(\cX_{v})$ since all sheaves on $\cX_{v}$ are representations of the generic stabilizer of a point on
  $\cX_{v}$.

  Then we can write the orbifold Chern character for
  $V$ as
  \begin{equation}
    \ch_{\orb}(V) = \sum_{v \in \mathrm{Box}}  (\ch_{H \to K})^{*}g^{-1}(v)^{*}(V|_{\cX_{v}}) \cdot \one_{v}^H.
  \end{equation}
\end{remark}
\begin{remark}
  We can also define the Chern character $\ch_{H \to E} = \ch_{K \to E} \circ \ch_{H \to K}$ and use it to pull back elliptic cohomology classes to
  $H^{*}(\cX)$. The orbifold version is constructed analogously and takes values in the double inertia
  stack $H^{*}(\cI\cI\cX)$.
\end{remark}

\begin{example}
  Let $\cX \simeq [\bC^{3}/\mu_{3}]$, where the action is diagonal. Then we have
  \begin{equation}
    K(\cX) \simeq \frac{\bC[x^{\pm}]}{\langle (1-x^{3}) \rangle}, \;\;\;
    Ell(\cX) \simeq \{\theta(x^{3}) = 0\} \subset E.
  \end{equation}
\end{example}
Let $\cE = \theta(x)$ be an elliptic brane on $\cX$. Then
\begin{equation}
  \ch_{K \to E}^{*}(\theta(x)) = \theta(x) = \theta(e^{2\pi\sqrt{-1}/3})\cdot \cO_{e^{2\pi\sqrt{-1}/3}}+ \theta(e^{4\pi\sqrt{-1}/3})\cdot \cO_{e^{4\pi\sqrt{-1}/3}},
\end{equation}
where in the right we view $\theta(x)$ as an element of $\bC[x^{\pm}]/\langle 1-x^{3} \rangle$ and in the
last equation we used the isomorphism $\bC[x^{\pm}]/\langle 1-x^{3} \rangle$ with the set of functions on
the solution of equation $1-x^{3}=0$. $\cO_{e^{2\pi\sqrt{-1}/3}}, \cO_{e^{4\pi\sqrt{-1}/3}}$ denote the
skyscrapper sheaves at corresponding points.

Elliptic Chern character is
\begin{equation}
  \ch^{E \to K}(\theta(x)) = \theta(x) \cdot \one_{0} + \theta(q^{1/3}x) \cdot \one_{1/3}+
  \theta(q^{2/3}x) \cdot \one_{2/3},
\end{equation}
where $\one_{v} = \cO_{[\bC^{3}/\mu^{3}]_{v}}$.

\paragraph{\bf Elliptic central charges}

\begin{definition} \label{def:ECC}
  We define the K-theoretic central charge of an elliptic brane $\cE \in Ell_{T}(\cX_{\beta})$ of
  level $R \in K_{T}([V/\Gamma])$:
  \begin{equation} \label{eq:kCharge}
    Z^{K}(\cE, R) := \chi(\cH^{K}_{R} \otimes \ch_{\mathrm{orb}}^{E \to K}(\cE)),
  \end{equation}
\end{definition}

\section{Solid torus partition function}

Let $E_{G}^{\vee}$ with coordinate $z$ be an elliptic curve dual to $E_{G}$. Recall that $z$ is the K\"{a}hler variable, that is $z$ counts
the degree of quasimaps. It naturally appears as an interpolation parameter for elliptic cohomology classes from $Ell_{T}(\cX_{+})$ to
$Ell_{T}([V/G]) = Ell_{T \times G}(pt)$. Addition of this parameter also guarantees that the interpolation exists under certain degree restrictions.

\begin{Theorem}[Elliptic grade restriction rule] \label{th:EGRR}
  Let $\cL \to Ell_{T \times G}(pt) \times E_{G}^{\vee}$ be a line bundle nontrivial over the second component $E_{G}^{\vee}$. Then the following interpolation problem has a unique solution for generic $z$
  \begin{equation} \label{eq:EGRR}
    0 \to \cL \otimes \Theta(-V_{+}) \to \cL \otimes \cO_{Ell_{T \times G}(pt)} \to i^{*}\cL \otimes \cO_{Ell_{T}(\cX_{+})} \to 0
  \end{equation}
  if $\mathrm{deg}_{G}(\cL) = \mathrm{deg}_{G}(V_{+})$.
\end{Theorem}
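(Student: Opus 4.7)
The plan is to reduce the interpolation problem to a cohomology vanishing statement for a line bundle on the abelian variety $Ell_{T\times G}(pt)\times E_G^\vee$, and then apply the classical Appell--Humbert / Mumford vanishing theorem slice by slice over $E_G^\vee$.

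First, I would make the short exact sequence explicit. The subvariety $Ell_T(\cX_+)\subset Ell_{T\times G}(pt)$ is cut out by the section $s_+:=\prod_{i\le N_+}\theta(a_i^{-1}s^{-D_i})$ of $\Theta(V_+)$. Multiplication by $s_+$ gives the injection $\cL\otimes\Theta(-V_+)\hookrightarrow \cL$ with cokernel $i^*\cL\otimes\cO_{Ell_T(\cX_+)}$, so the sequence in \eqref{eq:EGRR} is exactly the Koszul-style resolution of this scheme-theoretic inclusion, pulled back to the product with $E_G^\vee$. Existence and uniqueness of the interpolating section for generic $z\in E_G^\vee$ then amount to the restriction map
\begin{equation*}
  H^0\bigl(Ell_{T\times G}(pt),\,\cL|_z\bigr)\longrightarrow H^0\bigl(Ell_T(\cX_+),\,i^*\cL|_z\bigr)
\end{equation*}
being an isomorphism. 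From the long exact sequence in cohomology this reduces to proving $H^0(\cM|_z)=H^1(\cM|_z)=0$, where $\cM:=\cL\otimes\Theta(-V_+)$: injectivity corresponds to $H^0(\cM|_z)=0$, and surjectivity is supplied by the vanishing of the connecting map, which is guaranteed by $H^1(\cM|_z)=0$.

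The degree hypothesis $\deg_G \cL=\deg_G V_+$ forces $\cM$ to have degree zero along the $E_G$ direction. Fixing generic $z$ and viewing $\cM|_z$ as a line bundle on the abelian variety $E_T\times E_G$, the classical vanishing theorem (Appell--Humbert together with Mumford's computation of cohomology of line bundles on abelian varieties) gives $H^\ast(E_T\times E_G,\cM|_z)=0$ as soon as $\cM|_z$ is a non-trivial degree-zero bundle. So the heart of the matter is showing $\cM|_z\not\cong\cO$ for generic $z$.

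The main obstacle is therefore this non-triviality step, and it hinges on the interpretation of the hypothesis ``$\cL$ non-trivial over the second component $E_G^\vee$'' as saying that the Poincar\'e-type component of $\cL$ coupling $E_G$ and $E_G^\vee$ is non-zero. Concretely, this translates into the classifying map $E_G^\vee\to \Pic^0(E_G)\cong E_G^\vee$ attached to the family $\{\cM|_z\}_{z\in E_G^\vee}$ being a non-trivial isogeny, hence not identically the origin. Away from a finite subscheme of $z$'s, the line bundle $\cM|_z$ is then a non-trivial degree-zero line bundle on $E_T\times E_G$, so both $H^0$ and $H^1$ vanish, yielding the unique interpolating section and proving the theorem.
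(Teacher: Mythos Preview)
Your approach---taking the long exact sequence and reducing to the vanishing of $H^0$ and $H^1$ of $\cM=\cL\otimes\Theta(-V_+)$---is exactly the paper's. There is, however, one imprecision: you take cohomology over the full abelian variety $E_T\times E_G$ and invoke Mumford's vanishing for non-trivial degree-zero bundles, but the hypothesis $\mathrm{deg}_G(\cL)=\mathrm{deg}_G(V_+)$ only forces the $E_G$-component of $c_1(\cM)$ to vanish; nothing is assumed about the $E_T$-direction, so $\cM|_z$ need not lie in $\Pic^0(E_T\times E_G)$ and the vanishing theorem you cite does not directly apply. The paper sidesteps this by working over the single elliptic curve $E_G=Ell_G(pt)$, treating the $E_T$-variables as parameters: on each $E_G$-fiber $\cM|_{z,a}$ is a degree-zero line bundle on an elliptic curve, and the non-triviality of $\cL$ over $E_G^\vee$ makes it non-trivial for generic $z$, so $H^0=H^1=0$ by the elementary fact for elliptic curves. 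Your argument is easily repaired along the same lines (or by pushing forward along $E_T\times E_G\to E_T$ and using Leray), but as written the ``degree-zero on $E_T\times E_G$'' step is a gap.
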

\begin{proof}
  Consider the long exact sequence corresponding to~\eqref{eq:EGRR}:
  \begin{equation}
    0 \to H^{0}(\cL \otimes \Theta(-V_{\pm})) \to H^{0}(\cL \otimes \cO_{Ell_{T \times G}(pt)}) \to H^{0}(i^{*}\cL \otimes \cO_{Ell_{T}(\cX_{\pm})}) \to H^{1}(\cL \otimes
    \Theta(-V_{\pm})) \to \cdots
  \end{equation}
  The line bundle $\cL \otimes \Theta(-V_{\pm})$ over elliptic curve $Ell_{G}(pt)$ has trivial $H^{0}$ and $H^{1}$. Line bundle over elliptic curve
  of degree 0 has nontrivial cohomology if and only if it is trivial. Since $\cL$ is nontrivial over $E^{\vee}_{G}$, the line bundle
  $\cL \otimes \Theta(-V_{+})$ is nontrivial for generic $z$.
\end{proof}

\begin{remark}
  Recall the usual grade restriction rule in our case (e.g.~\cite{AL1}).
  Consider projection
  \begin{equation}
    K_{T}([V/G]) \to K_{T}(\cX_{+}).
  \end{equation}
  This map is surjective (due to Kirwan surjectivity) but not injective. The other direction map can be thought as an interpolation of
  $\cB_{+} \in K_{T}(\cX_{+}) \simeq \Gamma(\cO_{\mathrm{Spec}(K_{T}(\cX_{+}))})$ to $K_{T \times G}(pt) \simeq \Gamma(\cO_{T \times G})$.
  The interpolation problem has a solution but it is not unique. A regular function on $T \times G$ is a linear combination of $T \times G$-characters.
  Let $K_{T\times G}(pt)|_{[L, L + D_{+}]}$ be a vector subspace spanned by $G$-characters in the range $[L, L+D_{+}-1] \subset \bZ \simeq \mathrm{char}(G)$.
  Grade restriction rule states that solution to the interpolation problem becomes unique on $K_{T \times G}(pt)|_{[L, L+D_{+}]}$, where
  $D_{+} = \sum_{i \le N_{+}} D_{i}$.
  \begin{equation}
    K_{T}(\cX_+) \simeq K_{T\times G}(pt)_{[L, L+D_{+}-1]}.
  \end{equation}
\end{remark}
Let us pick $a \in T$.
In figure~\ref{pic:interpolation} the infinite cylinder denotes $G \simeq \bC^{*} \simeq \mathrm{Spec}(K_{T \times G}(pt))|_{a}$. Then
$\mathrm{Spec}(K_{T \times G}(\cX_{+})|_{a}) = \bigcup_{i} p_{i} \subset \bC^{*}$. Interpolation problem is equivalent to finding
a regular function $f$ on $\bC^{*}$ that takes given values at $p_{i}$. We can compactify $\bC^{*}$ to $\bP^{1}$. The result is
called compactified K-theory~\cite{Inductive}. The problem of finding a function $f$ transforms to finding a line bundle $\cL \to \bP^{1}$
together with a local trivialization and its section $\tilde{f}$ such that the section takes required values at $p_{i}$.
Then given a local trivialization of $\cL$ the interpolation exists and is unique
for $f \in \Gamma(\bP^{1}, \cL)$ for $\cL = \cO(\sum_{i}p_{i}-\lambda [0]+\lambda[\infty])$, where $\lambda \in \mathrm{cochar}_{\bQ}(\bC^{*}) \simeq \bQ$
is a generic cocharacter. The twist by $\lambda$ does not change the degree of the line bundle but 
shifts its polytope (that is an interval in $\bR$) by $\lambda$ such that the number of integral points inside
coincides with its length.\\

The middle section of the cylinder is a fundamental domain of the elliptic curve $E = E_{G} \simeq Ell_{G}(pt)|_{a}$. Interpolation
problem is then to find a line bundle $\cL \to \cE$ with a section $s$ such that in a given trivialization $s(p_{i})$ are fixed numbers.
It is equivalent to finding a quasiperiodic function on $G$ with prescribed values at $p_{i}$.\\

\begin{figure}[h]
  \def\svgwidth{8cm}
\begingroup%
  \makeatletter%
  \providecommand\color[2][]{%
    \errmessage{(Inkscape) Color is used for the text in Inkscape, but the package 'color.sty' is not loaded}%
    \renewcommand\color[2][]{}%
  }%
  \providecommand\transparent[1]{%
    \errmessage{(Inkscape) Transparency is used (non-zero) for the text in Inkscape, but the package 'transparent.sty' is not loaded}%
    \renewcommand\transparent[1]{}%
  }%
  \providecommand\rotatebox[2]{#2}%
  \newcommand*\fsize{\dimexpr\f@size pt\relax}%
  \newcommand*\lineheight[1]{\fontsize{\fsize}{#1\fsize}\selectfont}%
  \ifx\svgwidth\undefined%
    \setlength{\unitlength}{482.4671083bp}%
    \ifx\svgscale\undefined%
      \relax%
    \else%
      \setlength{\unitlength}{\unitlength * \real{\svgscale}}%
    \fi%
  \else%
    \setlength{\unitlength}{\svgwidth}%
  \fi%
  \global\let\svgwidth\undefined%
  \global\let\svgscale\undefined%
  \makeatother%
  \begin{picture}(1,0.24690743)%
    \lineheight{1}%
    \setlength\tabcolsep{0pt}%
    \put(0,0){\includegraphics[width=\unitlength,page=1]{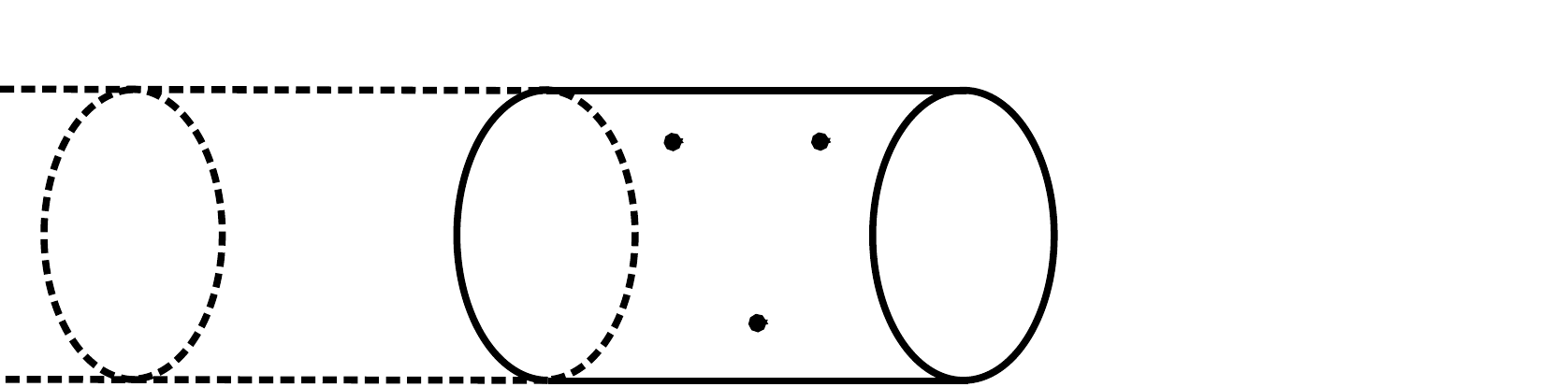}}%
    \put(0.35819962,0.20707763){\makebox(0,0)[lt]{\lineheight{1.25}\smash{\begin{tabular}[t]{l}$p_1$\end{tabular}}}}%
    \put(0.42087314,0.07113659){\makebox(0,0)[lt]{\lineheight{1.25}\smash{\begin{tabular}[t]{l}$p_2$\end{tabular}}}}%
    \put(0.53229497,0.20911338){\makebox(0,0)[lt]{\lineheight{1.25}\smash{\begin{tabular}[t]{l}$p_3$\end{tabular}}}}%
    \put(0,0){\includegraphics[width=\unitlength,page=2]{Interpolation.pdf}}%
  \end{picture}%
\endgroup%

  \caption{Interpolation}
  \label{pic:interpolation}
  \centering
\end{figure}

We can compute solution to the interpolation problem~\eqref{eq:EGRR} explicitly. Let $k \le N_{+}$ and $\zeta \in E[D_{k}]$.
Then $\cO_{Ell_{T}(\cX_{+})}$ is spanned over $\cO_{Ell_{T}(pt)}$ by the restriction of
\begin{equation}
  \tilde{\cE}_{z}^{(k, \zeta)} := \frac{\theta(\zeta a_{k}^{1/D_{k}}sz^{{-1}})}{\theta(z^{-1})\theta(\zeta a_{k}^{1/D_{k}}s)}\prod_{i \le N_{+}}\theta(a_{i}s^{D_{i}})
\end{equation}
to $\cX_{+}$.
$Ell_{T}(\cX_{+}) = \{\prod_{i \le N_{+}}\theta(a_{i}s^{D_{i}}) = 0\} \subset E_{T} \times E_{G}$ is a union of $\sum_{i \le N_{+}}D_{i}^{2}$ copies
of $E_{T}$. Then $\cE_{+}^{(k, \zeta)}|_{\cX_{+}}$ is nonzero only on one of these
copies cut out by the equation
\begin{equation}
  \theta(\zeta a_{k}^{1/D_{k}}s) = 0.
\end{equation}
We have
\begin{equation}
  \tilde{\cE}^{(k, \zeta)}_{z} \in \Gamma(E_{T} \times E_{G}, \; \cU_{(k, \zeta)} \otimes \Theta(V_{+})),
\end{equation}
where Poincare line bundle $\cU_{(k, \zeta)}$ is a bundle with the meromorphic section
\begin{equation}
  \frac{\theta(\zeta a_{k}^{1/D_{k}}sz^{{-1}})}{\theta(z^{-1})\theta(\zeta a_{k}^{1/D_{k}}s)}.
\end{equation}
These sections have a problem that they have different transformation properties with respect to $z$ for different $(k, \zeta)$. We upgrade them
to have the same transformation properties, that is to be sections of the same bundle over $E_{T} \times E_{G} \times E_{G}^{\vee}$.
Let
\begin{equation}
  \cE^{(k, \zeta)}_{z} := \frac{\theta(z^{-1})}{\theta(\zeta a_{k}^{1/D_{k}}z^{-1})}\tilde{\cE}^{(k, \zeta)}_{z} =
  \frac{\theta(\zeta a_{k}^{1/D_{k}}sz^{{-1}})}{\theta(\zeta a_{k}^{1/D_{k}}z^{-1})\theta(\zeta a_{k}^{1/D_{k}}s)}\prod_{i \le N_{+}}\theta(a_{i}s^{D_{i}}).
\end{equation}
Then
\begin{equation}
  \cE^{(k, \zeta)}_{z} \in \Gamma(E_{T} \times E_{G}, \; \cU \otimes \Theta(V_{+})),
\end{equation}
where Poincare line bundle $\cU \to E_{G} \times E^{\vee}_{G}$ has a meromorphic section
\begin{equation}
   \frac{\theta(sz^{-1})}{\theta(s)\theta(z^{-1})}.
\end{equation}

The price to pay for this simplification is that
\begin{equation}
  \cE^{(k, \zeta)}_{z}|_{\cX_{+}}
\end{equation}
is not a trivial bundle over the elliptic curve of Kahler variables $E^{\vee}_{G}$. More explicitly, the dependence on the Kahler variables is
the same as
\begin{equation}
  \frac{\theta(z^{-1})}{\theta(s^{-1}z^{-1})}.
\end{equation}

\begin{Theorem}[Central charge is equal to the solid torus partition function] \label{th:solidTorus}
  Let $\cX_{+}$ be as above and $\cE_{+} \in \Gamma(\cO_{Ell_{T}(\cX_{+})})$ and $\cL \simeq \cU \otimes \Theta(V_{+})$ satisfy the condition of
  Theorem~\ref{th:EGRR}. 
  Then if $Z^{K}(\cE_+, V_+^{\vee})$ converges then
  \begin{equation} \label{eq:solidTorus}
    Z^{K}(\cE_{+}, V_{+}^{\vee}) = \frac1{2\pi i}\oint_{\cC_{\delta}} \frac{\dd s}{s} \; \Gamma_{q} \cdot \cE_{z} = \mathrm{tr}(\Gamma_{q} \otimes \ch^{*}_{K \to E}(\cE_{z})),
  \end{equation}
  where  $\mathrm{tr}$ denotes the integration over the maximal compact subgroup of $G$, the K-theoretic Gamma factor is
  \begin{equation}
    \Gamma_{q} = \frac{1}{\prod_{i=1}^{N}\phi(a_{i}^{-1}s^{-D_{i}}q^{q_{i}/2})},
  \end{equation}
  and $\cE_{z}$ is a unique solution to the interpolation problem
  \begin{equation}
    0 \to \cL \otimes \Theta(-V_{+}) \to \cL \otimes \cO_{Ell_{T \times G}(pt)} \to i^{*}\cL \otimes \cO_{Ell_{T}(\cX_{+})} \to 0
  \end{equation}
  for 
  \begin{equation}
  	\cE_+ \cdot \frac{\theta(z^{-1})}{\theta(s^{-1}z^{-1})} \in \Gamma(i^* \cL \otimes \cO_{Ell_T(\cX_+)}).
  \end{equation}

  The contour $\cC_{\delta}$ is a circle which is a shifted compact part of $G$. The shift is choosen to separate the poles from
  $\phi(V_{+}^{\vee})$ and $\phi(V_-^\vee)$.

  Moreover, if $Z^K(\cE_+, V_+^{\vee})$ does not converge, then the equality~\eqref{eq:solidTorus} should be
  understood as an asymptotic expansion as $z \to 0$.  	
\end{Theorem}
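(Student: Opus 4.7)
The plan is to evaluate the right-hand contour integral by residue calculus, show that the residues reproduce term-by-term the localization sum defining $Z^K(\cE_+, V_+^\vee)$, and verify the second equality by recognising $\cC_\delta$ as a shifted maximal compact subgroup. When the central charge diverges as a formal $z$-series, the same residue calculation will show that the contour integral captures its asymptotic expansion as $z \to 0$.

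First, I would unpack the left-hand side. Using~\eqref{eq:hFunction}, \eqref{eq:kCharge} and the orbifold localization formula~\eqref{eq:orbEuler1}, $Z^K(\cE_+, V_+^\vee)$ becomes an explicit sum over $\beta \in \mathrm{Eff}$, twisted sectors $v = v(\beta)$, fixed points $k \le N_+$ with $g(v)^{D_k} = 1$, and roots of unity $\zeta \in \mu_{D_k}$, each evaluated at $s = \zeta a_k^{-1/D_k}$. In this sum the $\phi$-factors from $\hat{\Gamma}_{K,V_+^\vee}$ combine with those in the vertex function, the level prefactor $\det f_{\cC}^* V_+^\vee$ appears as a theta-function ratio, and $\ch_{\mathrm{orb}}^{E \to K}(\cE_+)$ supplies the twisted-sector $q$-shift of the elliptic brane.

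Second, I would evaluate the contour integral by residues. The poles of $\Gamma_q$ inside $\cC_\delta$ are the zeros of $\phi(a_i^{-1} s^{-D_i} q^{q_i/2})$ for $i \le N_+$, located at $s = \zeta a_i^{-1/D_i} q^{-(q_i/2+n)/D_i}$ with $\zeta \in \mu_{D_i}$ and $n \ge 0$; the shift $\delta$ is chosen precisely to separate these from the $i > N_+$ poles, which lie outside. Setting $\beta = (q_i/2+n)/D_i$ gives $d_k(\beta) = n \in \bZ_{\ge 0}$, so such poles are in natural bijection with terms of the localization sum attached to $pt_k$. The residue of $\Gamma_q$ at such a pole, computed via standard $q$-Pochhammer identities, produces exactly the denominator $\prod_i \phi(\iota_{v(\beta)}^* a_i^{-1} s^{-D_i} q^{-d_i(\beta)})^{-1}$ together with the virtual-normal-bundle factor appearing in the vertex function. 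Next I would invoke the interpolation characterization of $\cE_z$ from Theorem~\ref{th:EGRR}: at each pole the value of $\cE_z$ is determined by its restriction to $Ell_T(\cX_+)$ at $s = \zeta a_k^{-1/D_k}$ combined with the quasi-periodicity~\eqref{eq:quasiPer} of sections of $\cL$. The quasi-periodicity reproduces both the K\"ahler factor $z^{\lfloor \beta \rfloor}$ and the theta prefactor $\theta(z^{-1})/\theta(q^{-\{\beta\}} s^{-1} z^{-1})$ appearing in $\cH^K$, while the restriction yields $\cE_+$ on the correct inertia stack component with the $q$-shift coming from $\ch_{\mathrm{orb}}^{E \to K}$. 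Matching residue-by-residue then establishes the first equality when $Z^K$ converges; recognising $\cC_\delta$ as a shifted compact real form of $G$ immediately gives the second equality as a trace. For the divergent case, the residue calculation assigns each pole a determined order in $z$, so the contour integral furnishes the asymptotic expansion asserted.

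I expect the main technical obstacle to be the combinatorial bookkeeping: theta-function prefactors, signs $(-1)^{\sigma_R(\beta)}$, fractional $q$-shifts $q^{-\{d_i(\beta)\}}$, and elliptic-brane quasi-periodicity constants must all cohere across different box sectors and fixed points. The degree condition $\deg_G(\cL) = \deg_G(V_+)$ from Theorem~\ref{th:EGRR} is precisely what makes these cancellations happen, and confirming that this degree count exactly absorbs the level $V_+^\vee$ chosen here will be the heart of the argument.
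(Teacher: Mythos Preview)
Your proposal is correct and follows essentially the same strategy as the paper: compute the contour integral by residues, identify the poles of $\Gamma_q$ coming from $\phi(V_+^\vee)$ with the localization data $(\beta, k, \zeta)$, use the quasi-periodicity of $\cE_z \in \Gamma(\cU \otimes \Theta(V_+))$ to extract the factors $z^{\lfloor\beta\rfloor}$ and $\theta(z^{-1})/\theta(q^{-\{\beta\}}s^{-1}z^{-1})$, and match residue-by-residue with the orbifold Euler characteristic expression for $Z^K(\cE_+, V_+^\vee)$. One small slip: your pole location has the wrong sign in the $q$-exponent (the zeros of $\phi(a_i^{-1}s^{-D_i}q^{q_i/2})$ sit at $s = \zeta a_i^{-1/D_i} q^{+(q_i/2+n)/D_i}$, not $q^{-(q_i/2+n)/D_i}$), but this is a harmless typo that does not affect the argument.
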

\begin{proof}
  We prove the theorem by computing the integral by residues and matching them with the contributions from
  the central charge.

  Let us deform the integration contour in the direction $s \to 0$.
  Then the contour hit the poles of
  \begin{equation}
    \frac{1}{\phi(V_{+}^{\vee})} = \frac{1}{\prod_{i \le N_{+}} \phi(a_{i}^{-1}s^{-D_{i}}q^{q_{i}/2})}.
  \end{equation}
  Notice that $\cE_z$ does not have any poles in $s$.
  The poles appear at
  \begin{equation}
    s = s_{k,\beta,m} = a_{k}^{1/D_{k}}q^{\beta} e^{\frac{2\pi \sqrt{-1} m}{D_k}}, \;\;\; k \le N_{+}, \; \beta \in \frac{1}{D_k}\bZ_{\ge 0}.
  \end{equation}
  
  Assume $\Gamma_{q} \cdot \cE_{z} \to 0, \; s \to 0$ on the complement to the neighbourhood of the poles. Then we have
  \begin{equation} \label{eq:coulomb1}
    \oint_{\cC_{\delta}} \frac{\dd s}{s} \; \Gamma_{q} \cdot \cE_{z} = \sum_{k \le N_{+}}\sum_{\beta \in \bZ_{\ge 0}/D_k}\sum_{0\le m \le D_k-1}
    \res_{s \to s_{k,\beta, m}} \frac{\dd s}{s} \, \Gamma_q \cdot \cE_z.
  \end{equation}
  In order to rewrite it as localization for the orbifold Euler characteristic we first shift 
  $s \to sq^{\beta}$:
  \begin{equation} \label{eq:coulomb2}
  	\oint_{\cC_{\delta}} \frac{\dd s}{s} \; \Gamma_{q} \cdot \cE_{z} = \sum_{k \le N_{+}}\sum_{\beta \in \bZ_{\ge 0}/D_k}\sum_{0\le m \le D_k-1}
  	\res_{s \to a_k^{1/D_k}e^{\frac{2\pi\sqrt{-1}m}{D_k}}} \frac{\dd s}{s} \, 
  	\frac{1}{\prod_{i \le N}\phi(a_i^{-1}s^{-D_i}q^{-d_i(\beta)})}\cdot \cE_z(q^{\beta}s).
  \end{equation}
  In order to compare with the Euler characteristic formula~\eqref{eq:orbEuler} we multiply and
  divide by the ideal of $K_T(\cX_{v(\beta)}) \; : \; \prod_{i, d_i(\beta) \in \bZ} (1-U_i)$,
  \begin{equation}
  	\frac{1}{\prod_{i \le N}\phi(a_i^{-1}s^{-D_i}q^{-d_i(\beta)})} = 
  	\frac{\prod_{i, \; d_i(\beta) \in \bZ} (1-U_i)}{\prod_i \phi(U_i q^{-d_i(\beta)})} 
  	\frac{1}{\prod_{i, \; d_i(\beta) \in \bZ} (1-U_i)}.
  \end{equation}

  Then we transform the elliptic brane factor. Since
  \begin{eqnarray}
    \cE_{z} \in \Gamma(\cU \otimes \Theta(V_{+}) ),
  \end{eqnarray}
  the $q^n$-shift of the elliptic brane is
  \begin{equation}
    \cE_{z}(s\zeta q^{n}) = z^{n}\prod_{i \le N_{+}}\frac{\theta(a_{i}^{-1}s^{-D_{i}}q^{-d_{i}(\beta)})}{\theta(a_{i}^{-1}s^{-D_{i}}q^{\{-d_{i}(\beta)\}})} \cdot\cE_{z}(s \zeta) = z^{n}\prod_{i \le N_{+}}\frac{\theta(a_{i}^{-1}s^{-D_{i}}q^{-d_{i}(\beta)})}{\theta(a_{i}^{-1}s^{-D_{i}}q^{\{-d_{i}(\beta)\}})} \cdot \frac{\theta(z^{-1})}{\theta(\zeta^{-1} s^{-1}z^{-1})} \cdot\cE_+.
  \end{equation}
  We further use the identity $\{-x\} = 1-\{x\}, \; x \notin \bZ$ to rewrite the last factor:
  \begin{equation}
  	\prod_{i \le N_{+}}\frac{\theta(a_{i}^{-1}s^{-D_{i}}q^{-d_{i}(\beta)})}{\theta(a_{i}^{-1}s^{-D_{i}}q^{\{-d_{i}(\beta)\}})} = \prod_{i, \; d_i(\beta) \in \bZ}(-U_i)^{-1}\prod_{i \le N_{+}}\frac{\theta(a_{i}^{-1}s^{-D_{i}}q^{-d_{i}(\beta)})}{\theta(a_{i}^{-1}s^{-D_{i}}q^{1-\{d_{i}(\beta)\}})}
  \end{equation}
  Collecting the results we obtain:
  \begin{equation}
  	\frac{1}{2\pi i} \oint \frac{\dd s}{s} \, \Gamma_q \cdot \cE_z = 
  	\sum_{k \le N_+}\sum_{\beta}\sum_m z^{\lfloor \beta \rfloor}\res_{s \to a_k^{1/D_k}e^{\frac{2\pi\sqrt{-1}m}{D_k}}}
  	\frac{\prod_{i, \, d_i(\beta) \in \bZ}(1-U_i^{-1})}{\prod_{i \le N}\phi(a_i^{-1}s^{-D_i}q^{-d_i(\beta)})}\frac{\theta(z^{-1})}{\theta(q^{-\{\beta\}}s^{-1}z^{-1})}\frac{\cE_+}{\prod_{i, \, d_i(\beta) \in \bZ}(1-U_i^{-1})}
  \end{equation}	

  Using the formulas~\eqref{eq:orbEuler},~\eqref{eq:hFunction} we can represent the integral~\eqref{eq:coulomb1} as
  \begin{equation}
    \sum_{\beta \in \mathrm{Eff}}z^{\lfloor\beta\rfloor}\frac{\theta(z^{-1})}{\theta(q^{-\{\beta\}}s^{-1}z^{-1})}\chi\left((\cH^{K}_{V_{+}})_{\beta} \otimes \ch^{E \to K}_{\mathrm{orb}}(\cE_{+})\right) =
    \chi(\cH^{K}_{V_{+}} \otimes \ch^{E\to K}_{\mathrm{orb}}(\cE_{+})).
  \end{equation}
\end{proof}

\begin{remark}
  We do not have to require $\cL$ to be unique solution of the interpolation problem. More generally, let $\cL \in \cU \otimes \Theta(R)$ for $R
  \in K_{T}([V/\Gamma])$. Define
  \begin{equation}
    \cE_{+} = \cL \cdot \frac{\theta(s^{-1}z^{-1})}{\theta(z^{-1})}\bigg|_{Ell_{T}(\cX_{+})} \in \Gamma(\cO_{Ell_{T}(\cX_{+}) \times E_{G}^{\vee}})_{\mathrm{mero}},
  \end{equation}
  where $\mathrm{mero}$ means that the section might have poles in $z$. By definition $\cE_{z}$ is some solution to the interpolation problem
  for $\cE_{+} \theta(z^{-1})/\theta(s^{-1}z^{-1})$.
  Then assuming convergence we have
  \begin{equation}
    Z^{K}(\cE_{+}, R) = \frac1{2\pi i} \oint_{\cC_{\delta}} \frac{\dd s}{s} \Gamma_{q} \cdot \cE_{z}.
  \end{equation}
  The only place where the proof above changes is the transformation factor computation for $\cE_{z}$ which is consistent with the definition of
  level $R$-structure.
\end{remark}

Last theorem provides a way to relate objects in the phases $\cX_{\pm}$ with objects on $[V/G]$.
By invoking this relation twice we can obtain the wall crossing statement: 

\begin{definition}
	Let $\cL = \cU \otimes \Theta(V_+)$, $\cE_+ \in \Gamma(\cO_{Ell_T(\cX_+)})$ and $\cE_z \in \Gamma(\cL)$ is
	 the unique interpolation of 
	 $$\cE_+ \cdot	\frac{\theta(z^{-1})}{\theta(s^{-1}z^{-1})}\bigg|_{\cX_+}.$$
	 Then 
	 $$\cE_- := E_z|_{\cX_-} \cdot \frac{\theta(s^{-1}z^{-1})}{\theta(z^{-1})}\bigg|_{\cX_-} \in 
	 \Gamma(\cO_{Ell_{T}(\cX_{-}) \times E_G^\vee})$$
	 is called the wall crossing brane of $\cE_+$. Note that it is a meromorphic section in $z$ as opposed
	 to $\cE_+$ that does not depend on $z$ at all.
\end{definition}
Note that multiplication by $\theta(z^{-1})/\theta(s^{-1}z^{-1})$ and interpolation above do not commute.\\

The following proposition is a consequence of definition above and theorem~\ref{th:solidTorus} applied to both $\cX_+$ and $\cX_-$.
\begin{Proposition}[Elliptic Wall Crossing] \label{th:wallCrossing}
	Let $\cE_+ \in \Gamma(\cO_{Ell_T(\cX_+)})$ and $\cE_- \in \Gamma(\cO_{Ell_{T}(\cX_{-}) \times E_G^\vee})$ be its wall crossing brane.
	 Then  $Z^K(E_-, V_+^{\vee})$ is the analytic continuation of $Z^K(E_+, V_+^{\vee})$ to the region $z \to \infty$ if it converges. Otherwise it is an asymptotic expansion
	 in $z \to \infty$.	
\end{Proposition}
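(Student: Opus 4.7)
The plan is to apply Theorem~\ref{th:solidTorus} to both phases using the same interpolating section $\cE_z$, and conclude that the two central charges are analytic continuations of each other via a common integral representation.

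First, I would apply Theorem~\ref{th:solidTorus} directly to the plus phase, obtaining
\begin{equation}
Z^K(\cE_+, V_+^\vee) \;=\; \frac{1}{2\pi i}\oint_{\cC_\delta} \frac{\dd s}{s}\, \Gamma_q \cdot \cE_z,
\end{equation}
where $\cE_z \in \Gamma(\cU \otimes \Theta(V_+))$ is the unique interpolation provided by Theorem~\ref{th:EGRR} of the section $\cE_+ \cdot \theta(z^{-1})/\theta(s^{-1}z^{-1})$ on $Ell_T(\cX_+)$. By that theorem, the equality holds in the region $z \to 0$ where the series expansion of the central charge converges, and is an asymptotic expansion otherwise.

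Next, I would invoke the definition of the wall crossing brane: by construction, $\cE_z|_{\cX_-}\cdot \theta(s^{-1}z^{-1})/\theta(z^{-1})|_{\cX_-} = \cE_-$, so the same $\cE_z$ is a (in general non-unique) solution to the interpolation problem for $\cE_-\cdot \theta(z^{-1})/\theta(s^{-1}z^{-1})$ on $\cX_-$, using the line bundle $\cU \otimes \Theta(V_+)$. This is precisely the setting of the remark following Theorem~\ref{th:solidTorus}, which permits arbitrary $R \in K_T([V/\Gamma])$ (here $R = V_+$) at the cost of losing uniqueness of the interpolation. Applying that generalized version to the minus phase gives
\begin{equation}
Z^K(\cE_-, V_+^\vee) \;=\; \frac{1}{2\pi i}\oint_{\cC_\delta} \frac{\dd s}{s}\, \Gamma_q \cdot \cE_z,
\end{equation}
as a genuine equality where the minus-side series converges, and as an asymptotic expansion in $z \to \infty$ otherwise. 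I would note here that the contour $\cC_\delta$ separates poles coming from $\phi(V_+^\vee)$ from those coming from $\phi(V_-^\vee)$: closing it inward picks up the $\beta \in \mathrm{Eff}_+$ residues comprising $\cH^K_{V_+^\vee}$ on $\cX_+$, while closing it outward yields the residues indexed by effective classes of $\cX_-$.

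Finally, since both central charges admit the same contour integral representation, they define a single analytic function of $z$ (with the expected singularities at special loci in $z$). The plus-phase series expansion lives near $z \to 0$ and the minus-phase series expansion lives near $z \to \infty$, so $Z^K(\cE_-, V_+^\vee)$ is by construction the analytic continuation of $Z^K(\cE_+, V_+^\vee)$ along the common integral. The main obstacle lies in the second step: justifying the use of the generalized solid torus formula with level $V_+^\vee$ in the minus phase, where the degree condition of Theorem~\ref{th:EGRR} fails and the interpolation is non-unique. The remark after Theorem~\ref{th:solidTorus} precisely handles this by reducing the verification to tracking the transformation factor for $\cE_z$, which I would confirm matches the level-$V_+^\vee$ convention on the minus side by direct computation with the quasiperiodicity property~\eqref{eq:quasiPer}.
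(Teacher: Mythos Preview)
Your proposal is correct and follows the paper's own argument: the paper states only that the proposition is a consequence of the definition of the wall crossing brane together with Theorem~\ref{th:solidTorus} applied to both $\cX_+$ and $\cX_-$, and you have spelled this out, correctly noting that on the minus side one must invoke the generalized form of the solid torus formula (the remark after Theorem~\ref{th:solidTorus}) since the level is $V_+^\vee$ rather than $V_-^\vee$.
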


\subsection{Quantum difference equation}

K-theoretic $I$-function satisfies a certain difference equation that can be thought of as a generalization of Picard-Fuchs differential equation
or Dubrovin (quantum) differential equation. This is evident in the Coulomb branch representation. Without loss of generality let $\cX = \cX_{+}$.

Let $T_{s} \; : \; s \to qs$ and $T_{z} \; : \; z \to qz$ be q-difference operators acting on $s$ and $z$ respectively.
By the previous section we know that
\begin{equation}
  Z^{K}(\cE_{+}, V_{+}^{\vee}) = \sum_{k \le N_{+}}\sum_{\beta \in \bZ/D_k}\sum_{\zeta,\zeta^{D_k}=1} res_{s \to \zeta a_{k}^{-1} q^{\beta}}
  \frac{\dd s}{s} \, \Gamma_{q} \cdot \cE_{z},
\end{equation}
where we expanded summation in $n$ to all integers (integrand does not have poles at $n \in \bZ_{<0}$).
By changing integration variable $s \to qs$ we can write
\begin{equation} \label{eq:residueShift}
  Z^{K}(\cE_{+}, V_{+}^{\vee}) = \sum_{k \le N_{+}}\sum_{\beta \in \bZ/D_k}\sum_{\zeta,\zeta^{D_k}=1} res_{s \to a_{k}^{-1}\zeta q^{\beta}} res_{s \to \zeta a_k^{-1} q^{\beta}}
  \frac{\dd s}{s} \, T_{s}(\Gamma_{q} \cdot \cE_{z}).
\end{equation}
Let us compute the action of $T_{s}$ on the integrand. We use
\begin{equation}
	\begin{aligned}
	   	&T_{x} \phi(x) = \frac{\phi(x)}{1-x}, \;\;\; T_{x}\theta(x) = (-x)^{-1}\theta(x).\\
	   	&\phi(q^n U) = \frac{\phi(U)}{(U;q)_n}, \\
	   	&\theta(q^n U) = U^{-n}q^{-\frac{n^2}{2}}(-q^{-\frac12})^{-n} \cdot \theta(U), \;\;\; n > 0, \\
	   	&\phi(q^{-n} U) = (q^{-n}U;q)_n \cdot \phi(U), \\
	   	&\theta(q^{-n} U) = U^{n}q^{-\frac{n^2}{2}}(-q^{-\frac12})^{n} \cdot \theta(U), \;\;\; n > 0.
 	\end{aligned}
\end{equation}
Let $V_{i} := U_{i}q^{q_{i}/2}$.
Then
\begin{equation}
  T_{s} \Gamma_{q} = T_{s}\frac{1}{\phi(V^{\vee})} = \frac{1}{\prod_{i}\phi(q^{-D_{i}}V_{i})} = \frac{\prod_{i > N_+}(V_i;q)_{-D_i}}{\prod_{i \le N_+}(q^{-D_i}V_{i};q)_{D_{i}}}  \cdot \Gamma_{q}.
\end{equation}
Moreover,
\begin{equation}
  \cE_{z} \in \Gamma(\cU \otimes \Theta(V_{+})).
\end{equation}
Operator $T_{s}$ is scalar on sections of both $\cU, \; \Theta(V_{+})$. We have
\begin{equation} \label{eq:diffPoincare}
  T_{s} \; \frac{\theta(sz^{-1})}{\theta(s)\theta(z^{-1})} = z, \;\;\; T_{z} \; \frac{\theta(sz^{-1})}{\theta(s)\theta(z^{-1})} = s
\end{equation}
\begin{equation}
  T_{s} \; \prod_{i \le N_{+}} \theta(V_{i}) =
  \prod_{i \le N_{+}} \theta(q^{-D_{i}}V_{i}) = \prod_{i \le N_{+}}V_{i}^{D_{i}}q^{-\frac{D_i^2}2}(-q^{-\frac12})^{D_i} \cdot \prod_{i \le N_{+}}\theta(V_{i}).
\end{equation}
Using this we find
\begin{equation} \label{eq:Tintegrand}
  T_{s} \, (\Gamma_{q} \cdot \cE_{z}) = z \prod_{i \le N_{+}}\frac{q^{-\frac{D_i(D_i+1)}{2}}(-V_{i})^{D_{i}}}{\prod_{k=0}^{D_i-1} (1-q^{k-D_i}V_i)}
  \prod_{i > N_+}(V_i;q)_{-D_i} \cdot (\Gamma_{q} \cdot \cE_{z}),
\end{equation}
where
\begin{equation}
  \frac{q^{-\frac{D_i(D_i+1)}{2}}(-V_i)^{D_i}}{\prod_{k=1}^{D_i}(1-q^{-k}V_i)} =
  \frac{q^{-\frac{D_i(D_i+1)}{2}}(-V_i)^{D_i}}{\prod_{k=1}^{D_i}q^{-k}(-V_i)(1-q^{k}V_i^{-1})} 
  = \frac{1}{\prod_{k=0}^{D_i-1} (1-q^{1+k}V_i^{-1})}.
\end{equation}
Thus, we can simplify~\eqref{eq:Tintegrand}:
\begin{equation}
	T_{s} \, (\Gamma_{q} \cdot \cE_{z}) = z \frac{\prod_{i > N_+}(V_i;q)_{-D_i}}{\prod_{i \le N_{+}} (qV_i^{-1};q)_{D_i}}
	 \cdot (\Gamma_{q} \cdot \cE_{z}).
\end{equation}
Using the the second part of~\eqref{eq:diffPoincare} we can produce $U_i$ and $V_{i}$
\begin{equation}
	a_i^{-1}(T_z)^{-D_i} \cdot \cE_z = U_i\,  \cE_z, \;\;\; a_i^{-1}q^{q_{i}/2}(T_z)^{-D_i} \cdot \cE_z = V_i\,  \cE_z.
\end{equation}
Therefore, we can write $T_s (\Gamma_q \cdot \cE_z)$ in terms of $T_z$ as well:
\begin{equation}
  \prod_{i \le N_+} \prod_{k=1}^{D_i}(1-q^{k-q_{i}/2} a_i T_z^{D_i}) \,z^{-1} T_s \, (\Gamma_{q} \cdot \cE_{z}) =  \prod_{i > N_{+}}\prod_{k=0}^{-D_i-1}(1-q^{k+q_{i}/2} a^{-1}_{i}T_{z}^{-D_{i}})
  \cdot (\Gamma_q \cdot \cE_z),
\end{equation}
or commuting the difference operator in the left hand side with $z^{-1}$:
\begin{equation}
	z^{-1}\prod_{i \le N_+} \prod_{k=1}^{D_i}(1-q^{k-q_{i}/2-D_i} a_i T_z^{D_i}) \, T_s \, (\Gamma_{q} \cdot \cE_{z}) =  \prod_{i > N_{+}}\prod_{k=0}^{-D_i-1}(1-q^{k+q_{i}/2} a^{-1}_{i}T_{z}^{-D_{i}})
	\cdot (\Gamma_q \cdot \cE_z),
\end{equation}

We can use this relation and formula~\eqref{eq:residueShift} to get a q-difference equation
on $Z^K(\cE_+, V_+)$. 
so
\begin{equation}
  \left[\prod_{i \le N_+} \prod_{k=1}^{D_i}(1-q^{k-q_{i}/2-D_i} a_i T_z^{D_i}) -  z \prod_{i > N_{+}}\prod_{k=0}^{-D_i-1}(1-q^{k+q_{i}/2} a^{-1}_{i}T_{z}^{-D_{i}})\right]Z^{K}(\cE_{+}, V_{+}) = 0.
\end{equation}
This equation is called the quantum difference equation and is analogous to Picard-Fuchs differential
equation in the cohomological case.

\paragraph{\bf Order of QqDE and elliptic cohomology}

The order of this equation is $\mathrm{max}(\sum_{i \le N_+} D_i^2, \sum_{i > N_+} D_i^2)$. This is in
a contrast with the cohomological case (quantum differential equation) where the order is 
$\mathrm{max}(\sum_{i \le N_+} D_i, \sum_{i > N_+} (-D_i))$. Geometrically the reason is clear.
In cohomology each fixed point $[pt/\mu_{r}]$ contributes $\dim(H^*(\cI [pt/\mu_r])) = \dim(K([pt/\mu_r])) = r$ solutions to the
equation meanwhile in K-theory the same fixed point contributes 
$\dim(K(\cI [pt/\mu_r])) = \dim(\cO_{Ell([pt/\mu_r])}) = r^2$ solutions.

\section{Example: degree $r$ hypersurface in $\bP^{n-1}$}

Consider the corresponding GLSM data: $D_1, \ldots, D_{N_+} = 1, \; D_N = -r$. We have $\cX_+ \simeq \cO_{\bP^{N_+-1}}(-r)$ and
$\cX_- \simeq [\bC^{N_+}/\mu_r]$. The R-charge weights can be set as $(0,\cdots,0,1)$ and an example of 
the superpotential is $p(x_1^r+ \cdots + x_{N_+}^r)$, where $(x_1, \ldots, x_{N_+}, p)$ are coordinates on 
$V \simeq \bC^{N}$.

In particular, 
\begin{equation}
	Ell_T(\cX_+) = \{\prod_{i=1}^{N_+}\theta(a_i s) = 0\}	 \subset E_T \times E_G.
\end{equation}
This is a degree $N_+$ cover over $E_T$. Whereas
\begin{equation}
	Ell_T(\cX_-) = \{\theta(a_{N} s^r) = 0\}	 \subset E_T \times E_G.
\end{equation}
This is a degree $r^2$ cover over $E_T$. We see that dimensions of elliptic cohomology of $\cX_+$ and $\cX_-$ (over $Ell_T(pt)$) coincide only if $N_+ =r^2$. This is in contrast with the K-theory where the
crepant condition is $N_+=r$.\\

\paragraph{\bf Geometric phase}

Consider the following elliptic branes on $[V/G]$:
\begin{equation}
	\cE_z^{(+,k)} := \frac{\theta(qa_k s z^{-1})}{\theta(qa_k z^{-1})}\prod_{i \le N_+, i \ne k} \theta(q a_is) \in \Gamma(\cU \otimes \Theta(V_+)).
\end{equation}
Restricting to the fixed points $pt_j = \{s = a_j^{-1}\}$ for generic $z$ we get
\begin{equation}
	\begin{aligned}
		&\cE_{z}^{(+,k)}|_{pt_j} = 0, \;\;\; j \ne k \\
		&\cE_{z}^{(+,k)}|_{pt_k} = \frac{\theta(qz^{-1})}{\theta(qa_k z^{-1})}\prod_{i \ne k} \theta(a_ka_i^{-1}).
	\end{aligned}
\end{equation}
Thus, these branes are supported on the fixed points and provide a basis (over $\cO_{E_T}$) of elliptic
branes. The solid torus partition function is
\begin{multline} \label{eq:geometricZ}
	Z^K(\cE_z^{(+,k)}, V_+^{\vee}) = \frac{1}{2\pi i}\oint_{\cC_{\delta}} \frac{\dd s}{s} \, \frac{\prod_{i \ne k}\phi(qa_is)}
	{\phi(a_N^{-1}s^{r}q)} \cdot \frac{\theta(qa_ksz^{-1})}{\theta(qa_k z^{-1})\phi(a_k^{-1}s^{-1})} = \\
	 = \sum_{n \ge 0} \res_{s \to a_k^{-1}q^n}\left[\frac{\dd s}{s \phi(a_k^{-1}s^{-1})} \right]
	\frac{\prod_{i \ne k}\phi(q^{n+1}a_ia_k^{-1})}
	{\phi(q^{rn+1}a_N^{-1}a_k^{-r})} \frac{\theta(q^{n+1}z)}{\theta(qa_k z)}.
\end{multline}
Now we compute
\begin{equation}
	\frac{\theta(q^{n+1}z)}{\theta(qa_k z)} = (-z)^{-n}q^{-\frac{n(n+1)}2}\frac{\theta(z^{-1})}{\theta(a_k^{-1} z^{-1})}
\end{equation}
and
\begin{equation}
	\res_{s \to a_k^{-1}q^n}\left[\frac{\dd s}{s \phi(a_k^{-1}s^{-1})} \right] = 
	-\res_{s \to 1} \left[ \frac{\dd s}{s \phi(q^{-n}s)} \right] = -\frac{1}{\phi(q)(q^{-n};q)_n}\res_{s \to 1}\left[ \frac{\dd s}{s(1-s)} \right] = (-1)^n\phi(q)^{-1}\frac{q^{\frac{n(n+1)}{2}}}{(q;q)_n}.
\end{equation}
Thus, the central charge is
\begin{equation}
	Z^K(\cE_z^{(+,k)}, V_+^{\vee}) = \phi(q)^{-1}\sum_{n \ge 0} \frac{z^n}{(q;q)_n}
	\frac{\prod_{i \ne k}\phi(q^{n+1}a_ia_k^{-1})}
	{\phi(q^{rn+1}a_N^{-1}a_k^{-r})}  \frac{\theta(z^{-1})}{\theta(a_k^{-1} z^{-1})}.
\end{equation}
It satisfies the QqDE in the geometric phase:
\begin{equation}
	\left[\prod_{i \le n} (1-a_i T_z) - z \prod_{k=0}^{n-1}(1-q^{k+1} a^{-1}_{N}T_{z}^{r})\right]Z^{K}(\cE_{+}, V_{+}) = 0,
\end{equation}
which can be also checked by a direct computation.

\paragraph{\bf Landau-Ginzburg phase}
In the other phase we can choose the following set of branes:
\begin{equation}
	\cE_z^{(-,\zeta)} := \frac{\theta(\zeta^{-1} a_N^{1/r} s^{-1} z)}{\theta(\zeta^{-1} a_N^{1/r} z)\theta(\zeta^{-1} a_N^{1/r} s^{-1})}\theta(a_Ns^{-r}) \in \Gamma(\tilde{\cU} \otimes \Theta(V_-)),
\end{equation}
where $[\zeta] \in E[r]$.
Restricting to the twisted sectors $s = \xi a_N^{1/r}$ of the unique fixed point we obtain
\begin{equation}
	\begin{aligned}
		&\cE_{z}^{(-,\zeta)}|_{pt_{\xi}} = 0, \;\;\; \zeta \ne \xi \\
		&\cE_{z}^{(-,\zeta)}|_{pt_\zeta} = \frac{\theta(z)}{\theta(\zeta^{-1} a_N^{1/r} z)}\lim_{s \to 1}\frac{\theta(\zeta^{r}s^{-r})}{\theta(s)} = -r\zeta^{r}\frac{\theta(z)}{\theta(\zeta^{-1} a_N^{1/r} z)}\frac{\theta'(\zeta^{r})}{\theta'(1)}.
	\end{aligned}
\end{equation}
We can also simplify the expression above using
\begin{equation}
	\theta'(q^{-n}) = -(q^{-1};q^{-1})_n\phi^2(q) = (-1)^{n+1}q^{-\frac{n(n+1)}{2}}(q;q)_n \phi^2(q).
\end{equation}

The solid torus partition function is
\begin{multline}
	Z^K(\cE_z^{(-,\zeta)}, V_-^{\vee}) = \frac{1}{2\pi i}\oint_{\cC_{\delta}} \frac{\dd s}{s} \, \frac{\phi(a_Ns^{-r})}
	{\prod_{i \le N_+}\phi(a_i^{-1}s^{-1})} \cdot \frac{\theta(\zeta^{-1} a_N^{1/r} s^{-1} z)}{\theta(\zeta^{-1} a_N^{1/r} z)\theta(\zeta^{-1} a_N^{1/r} s^{-1})} = \\
	= \sum_{n >  0} \res_{s \to \zeta^{-1} a_N^{1/r}q^{-n}}\left[\frac{\dd s}{s \, \theta(\zeta^{-1} a_N^{-1/r}s^{-1})} \right]
	\frac{\phi(\zeta^{r} q^{rn})}
	{\prod_{i \le N_+}\phi(\zeta a_i^{-1}a_N^{1/r}q^{n})} \frac{\theta(q^{n}z)}{\theta(\zeta^{-1} a_N^{1/r} z)}.
\end{multline}

We compute:
\begin{equation}
	\res_{s \to \zeta^{-1} a_N^{1/r}q^{-n}}\left[\frac{\dd s}{s \, \theta(\zeta^{-1} a_N^{1/r}s^{-1})} \right] =
	\res_{s \to 1}\left[ \frac{\dd s}{s \, \theta(q^{n}s)} \right] = q^{\frac{n^{2}}2}(-q^{-\frac12})^{n}\res_{s \to 1}\frac{\dd s}{s \, \theta(s)} =
  -q^{\frac{n^{2}}{2}}(-q^{-\frac12})^{n}\phi(q)^{-2}.
\end{equation}
Then
\begin{equation}
  Z^K(\cE_z^{(-,\zeta)}, V_-^{\vee}) = -\phi(q)^{-2}\sum_{n > 0} z^{-n}
	\frac{\phi(\zeta^{r} q^{rn})}
	{\prod_{i \le N_+}\phi(\zeta a_i^{-1}a_N^{-1/r}q^{n})} \frac{\theta(z)}{\theta(\zeta^{-1} a_N^{1/r} z)}.
\end{equation}

It satisfies the QqDE:
\begin{equation}
  \left[\prod_{k=1}^{r}(1-q^{k-r} a_N T_z^{-r}) -z \prod_{i \le N_+}(1- a^{-1}_{i}T_{z}^{-1})\right]Z^{K}(\cE_{-}, V_{-}) = 0.
\end{equation}
As we see, these equations are of order $\max(r^{2}, N_{+})$ as opposed to order $\max(r,n)$ of the usual Picard-Fuchs
equation. For the quintic threefold this phenomenon was noticed, for example in~\cite{Wen}.
The reason being that $\dim(K(\cI [\bC^{N_{+}}/\mu_r])) = r^{2}$ as opposed to $\dim(K([\bC^{N_{+}}/\mu_r]))=r$.

{\bf Wall Crossing}
We can also deform the contour in~\eqref{eq:geometricZ} to $s \to \infty$.
Wall-crossing of the branes $\cE^{(+,k)}_{z}$ has the form
\begin{equation}
	\frac{\theta(s^{-1}z^{-1})}{\theta(z^{-1})}\cE_z^{(+,k)} = \frac{\theta(s^{-1}z^{-1})\theta(qa_k s z^{-1})}{\theta(z^{-1})\theta(qa_k z^{-1})}\prod_{i \le N_+, i \ne k} \theta(q a_is) \in \Gamma(\cO_{Ell_{\cX_{-}} \times E^{\vee}_{G}}),
\end{equation}
and wall-crossing of $\cE^{(-,k)}_{z}$ are
\begin{equation}
  \frac{\theta(s^{-1}z^{-1})}{\theta(z^{-1})}\cE_z^{(+,k)} = \frac{\theta(qsz)\theta(\zeta^{-1} a_N^{1/r} s^{-1} z)}{\theta(qz)\theta(\zeta^{-1} a_N^{1/r} z)}\frac{\theta(a_Ns^{-r})}{\theta(\zeta^{-1} a_N^{1/r} s^{-1})} \in \Gamma(\cO_{Ell_{\cX_{+}} \times E^{\vee}_{G}}).
\end{equation}

\appendix

\section{Elliptic cohomology}
There are various constructions of elliptic cohomology that serve different purposes.
We follow expositions of~\cite{GKV} and~\cite{AO, Inductive, Nonabelian} and we refer
the reader there for more detailed exposition of the subject. Our notatins are closer to
\cite{AO, Inductive, Nonabelian} rather than \cite{GKV}. Here we present a very brief introduction
to the subject.\\

Given a Lie group $G$ (which is abelian in this paper), equivariant cohomology and K-theory of a
smooth DM stack are rings of $H_{G}(pt)$ (respectively $K_{G}(pt)$) modules. Equivariant cohomology
is not a ring of modules but rather a (sheaf on a) projective scheme, so it is more convenient to use the
dual language of spectra. For example, spectrum of equivariant cohomology (K-theory) is an affine scheme
over $\mathrm{Lie}(G) \simeq \Spec(H_{G}(pt))$ (respectively $G \simeq \Spec(K_{G}(pt))$).\\

Let $E \simeq \bC^{*}/q^{\bZ}$ be an elliptic curve, where we can think of $q$ as either a complex number
or a formal parameter. In the study of elliptic cohomology the modularity properties (that correspond to
the $q$-dependence) play an important role, but we do not study it here. We define G-equivariant elliptic
cohomology of a point to be a projective scheme:
\begin{equation}
  Ell_{G}(pt) = E_{G} := Bun_{G}(E^{\vee}).
\end{equation}
Two main examples for this paper are $G = T \simeq (\bC^{*})^{n}$ and $G = \mu_{n}$:
\begin{equation}
  E_{T} \simeq T/q^{\mathrm{cochar}(T)}, \;\;\; E_{\mu_{n}} = E[n],
\end{equation}
where $E[n]$ is a group of $n$-torsion points on $n$, or simply $n$-th roots of unity on $E$.
For a general smooth DM stack $X$
elliptic cohomology is a projective scheme $$Ell_{G}(X) \stackrel{\pi_{X}}{\to} Ell_{G}(pt).$$
One should think of the scheme above as generalization
of $\Spec(K_{G}(X)) \to \Spec(K_{G}(pt))$.

Elements of elliptic cohomology are
sections of line bundles over $Ell_{G}(X)$. In particular, the pushforward of
structure sheaf $(\pi_{X})_{*}\cO_{Ell_G(X)}$ is a sheaf of $\cO_{E_{G}}$-modules that satisfies a set
of axioms for equivariant generalized cohomology theories~\cite{GKV}.

One can define $Ell_{G}(X)$ either axiomatically or provide explicit construction~\cite{Grojnowsky}.
Another useful fact is that elliptic cohomology fits into the natural diagram:
\begin{equation}
    \begin{tikzcd}
      \Spec(H^{*}_{T}(\cX)) \arrow[d] \arrow[r, "\ch_{H \to K}"] &\Spec(K_{T}(X)) \arrow[d] \arrow[r, "\ch_{K \to E}"]
      & Ell_{T}(X) \arrow[d, "\pi_{X}"]\\
      \mathrm{Lie}(T) \arrow[r, "\exp"]& T \arrow[r, "\mod \; q^{\mathrm{cochar}(T)}"]& E_{T}
    \end{tikzcd}
  \end{equation}


\begin{thebibliography}{AAA}

\bibitem{AO} Mina Aganagic and Andrei Okounkov, 
``Elliptic stable envelopes,"
J. Amer. Math. Soc. {\bf 34} (2021), no. 1, 79--133.

\bibitem{AL1} Konstantin Aleshkin and Chiu-Chu Melissa Liu,
``Higgs-Coulomb correspondence and wall-crosing in abelian GLSMs," 
preprint 2022. 

\bibitem{BFK} Matthew Ballard, David  Favero, and Ludmil Katzarkov, 
``Variation of geometric invariant theory quotients and derived categories," 
J. Reine Angew. Math. {\bf 746} (2019), 235--303. 

\bibitem{BCS}  Lev Borisov,  Linda Chen, and Gregory G. Smith,
``The orbifold Chow ring of toric Deligne-Mumford stacks,"
J. Amer. Math. Soc. {\bf 18} (2005), no. 1, 193--215.

\bibitem{CR} Weimin Chen and Yongbin Ruan,
``A new cohomology theory of orbifold," 
Comm. Math. Phys. {\bf 248} (2004), no. 1, 1--31.

\bibitem{CCK} Daewoong Cheong, Ionuţ  Ciocan-Fontanine, and Bumsig Kim,
``Orbifold quasimap theory,"
Math. Ann. {\bf 363} (2015), no. 3-4, 777--816. 


\bibitem{5aut}  Ionuţ Ciocan-Fontanine, David Favero, J\'{e}r\'{e}my Gu\'{e}r\/{e}, Bumsig Kim, and Mark Shoemaker,
``Fundamental Factorization of a GLSM, Part I: Construction,"
arXiv:1802.05247, to appear in Mem. Amer. Math. Soc. 

\bibitem{CKM} Ionuţ Ciocan-Fontanine and Bumsig Kim and Davesh Maulik,
``Stable quasimaps to GIT quotients,"
J. Geom. Phys. {\bf 75} (2014), 17--47. 

\bibitem{LGCY}  Alessandro Chiodo and Yongbin Ruan, 
``Landau-Ginzburg/Calabi-Yau correspondence for quintic three-folds via symplectic transformations,"
Invent. Math. {\bf 182} (2010), no. 1, 117--165.

\bibitem{CIR} Alessandro Chiodo, Hiroshi Iritani, Yongbin Ruan,
``Landau-Ginzburg/Calabi-Yau correspondence, global mirror symmetry and Orlov equivalence,"
Publ. Math. Inst. Hautes \'{E}tudes Sci. {\bf 119} (2014), 127--216. 


\bibitem{CIJ} Tom Coates, Hiroshi Iritani, Yunfeng Jiang, 
``The crepant transformation conjecture for toric complete intersections,"
Adv. Math. {\bf 329} (2018), 1002--1087.

\bibitem{CIJS} Tom Coates, Hiroshi Iritani, Yunfeng Jiang, and Ed Segal,
``K-theoretic and categorical properties of toric Deligne-Mumford stacks,"
Pure Appl. Math. Q. {\bf 11} (2015), no. 2, 239--266.


\bibitem{FJR}  Huijun Fan, Jarvis Tyler, and Yongbin Ruan,
``A mathematical theory of the gauged linear sigma model,"
Geom. Topol. {\bf 22} (2018), no. 1, 235--303.

\bibitem{FK}  David Favero and Bumsig Kim, 
``General GLSM Invariants and Their Cohomological Field Theories,"
arXiv:2006.12182.

\bibitem{GKV} Victor Ginzburg, Mikhail Kapranov, and Eric Vasserot,
``Elliptic Algebras and Equivariant Elliptic Cohomology,"
arXiv:q-alg/9505012.

\bibitem{GW} Eduardo Gonz\'{a}lez and Christopher Woodward, 
``Quantum Kirwan for quantum K-theory,"
Facets of algebraic geometry. Vol. I, 265--332, London Math. Soc. Lecture Note Ser., {\bf 472}, Cambridge Univ. Press, Cambridge, 2022.

\bibitem{Grojnowsky} Ian Grojnowski, 
``Delocalised equivariant elliptic cohomology," Elliptic cohomology, 114--121, 
London Math. Soc. Lecture Note Ser., {\bf 342}, Cambridge Univ. Press, Cambridge, 2007. 

\bibitem{HHP} Manfred Herbst, Kentaro Hori, and David Page, 
``Phases Of N=2 Theories In 1+1 Dimensions With Boundary,"
arXiv:0803.2045.

\bibitem{HoriRomo} Kentaro Hori and Mauricio Romo, 
``Exact Results In Two-Dimensional (2,2) Supersymmetric Gauge Theories With Boundary,"
arXiv:1308.2438.

\bibitem{Hosono} Shinobu Hosono,
``Central charges, symplectic forms, and hypergeometric series in local mirror symmetry,"
Mirror symmetry. V, 405--439,
AMS/IP Stud. Adv. Math., {\bf 38}, Amer. Math. Soc., Providence, RI, 2006. 

\bibitem{Iritani} Hiroshi Iritani, 
``An integral structure in quantum cohomology and mirror symmetry for toric orbifolds,"
Adv. Math. {\bf 222} (2009), no. 3, 1016--1079.

\bibitem{Kawasaki} Tetsuro Kawasaki, 
``The Riemann-Roch theorem for complex V-manifolds,"
Osaka J. Math. {\bf 16}  (1979), no. 1, 151--159.

\bibitem{Inductive} Andrei Okounkov,
``Inductive construction of stable envelopes,"
Lett. Math. Phys. {\bf 111} (2021), no. 6, Paper No. 141, 56 pp. 

\bibitem{Nonabelian} Andrei Okounkov,
``Nonabelian stable envelopes, vertex functions with descendents, and integral solutions of $q$-difference equations,"
arXiv:2010.13217.

\bibitem{Ruan} Yongbin Ruan,
``Surgery, quantum cohomology and birational geometry,"
Northern California Symplectic Geometry Seminar, 183–198, Amer. Math. Soc. Transl. Ser. 2, 196, Adv. Math. Sci., {\bf 45}, Amer. Math. Soc., Providence, RI, 1999. 

\bibitem{Ruan2} Yongbin Ruan,
``Stringy geometry and topology of orbifolds,"
Symposium in Honor of C. H. Clemens (Salt Lake City, UT, 2000), 187--233, Contemp. Math., {\bf 312}, Amer. Math. Soc., Providence, RI, 2002. 

\bibitem{Ruan3} Yongbin Ruan, 
``The cohomology ring of crepant resolutions of orbifolds,"
Gromov-Witten theory of spin curves and orbifolds, 117--126, Contemp. Math., {\bf 403}, Amer. Math. Soc., Providence, RI, 2006. 

\bibitem{RWZ} Yongbin Ruan, Yaoxiong Wen, and Zijun Zhou,
"Quantum K-theory of toric varieties, level structures, and 3d mirror symmetry,"
arXiv:2011.07519.

\bibitem{RZ} Yongbin Ruan and Ming Zhang,
``The level structure in quantum K-theory and mock theta functions," 
arXiv:1804.06552.

\bibitem{Shoemaker} M. Shoemaker,
``Towards a mirror theorem for GLSMs,"
arXiv:2108.12360. 

\bibitem{Wen} Yaoxiong Wen,
``Difference equation for quintic 3-fold,"
SIGMA Symmetry Integrability Geom. Methods Appl. {\bf 18} (2022), Paper No. 043, 25 pp.

\bibitem{Witten} Edward Witten,
``Phases of N=2 theories in two dimensions,"
Nuclear Phys. {\bf B 403} (1993), no. 1-2, 159--222.

\bibitem{Zhou} Zijun Zhou,
``Virtual Coulomb branch and vertex functions,"
arXiv:2107.06135. 

\end{thebibliography}

\end{document}